\newtheorem{theorem}{Theorem}[section]
\newtheorem{algorithm}[theorem]{Algorithm}
\newtheorem{remark}[theorem]{Remark}
\newtheorem{definition}[theorem]{Definition}
\newtheorem{proposition}[theorem]{Proposition}
\newtheorem{lemma}[theorem]{Lemma}
\newtheorem{prob}[theorem]{Problem}
\newcommand{\argmin}[1]{\underset{{#1}}{\operatorname{argmin}}}
\newcommand{\norm}[1]{\left|\left|#1\right|\right|}
\newcommand{\var}[1]{\left( {#1} \right)}
\newcommand{\Mh}{\nabla}
\begin{document}
\title[IRMLR]{Iterative Refinement of A Modified Lavrentiev Regularization Method for De-convolution of the Discrete Helmholtz Type Differential Filter}
\author{Nathaniel Mays}
\address{Verona, WI}
\email{nate.mays@gmail.com}
\author{Ming Zhong$^*$}
\address{Department of Applied Mathematics \& Statistics, Johns Hopkins University, Baltimore, MD $21218$, USA}
\email{mzhong5@jhu.edu}
\date{\today}
%
%
\keywords{Tikhonov Regularization, Lavrentiev Regularization, Iterative Refinement, Large Eddy Simulation, Helmholtz Filter}
\begin{abstract}
We propose and analyze an iterative refinement of a modified Lavrentiev regularization method for deconvolution of the discrete Helmholtz-type differential filter.  The modification for the Lavrentiev regularization method exploits the properties of the Helmholtz filter, and we prove that the modification reduces the error bound between the original solution and the approximated solution.  Furthermore, we derive an optimal stopping condition on the number of iterations necessary for the regularization.  We provide numerical examples demonstrating the benefits of this iterative modified Lavrentiev regularization over a family of Tikhonov regularization methods.
\end{abstract}
\maketitle
\section{Introduction}
Since its introduction in $1963$, Large Eddy Simulation (LES),  a model in between the direct simulation of the Navier-Stokes equations (DNS) and Reynolds-averaged Navier-Stokes equations (RANS), has been widely applied in meteorology, astrophysics, aerospace, mechanical, chemical and environmental engineering \cite{M84, GCS10, MM97, SF97, DSH01, NY98, PSB01}.  With a certain low-pass filter, LES is able to reduce the spatial scales (and sometimes even the temporal scales),  therefore reducing the computational load of doing DNS \cite{PSB01, GOP04, LL05, BIL06}.  Most LES filters use convolution, defined as follows,
\begin{equation}\label{eq:les_filter}
  \bar{u}(x; \delta) = \int_{x' \in \Omega} G(x - x'; \delta)u(x')\, dx'
\end{equation}
where $\Omega$ is the spatial domain, and $\delta$ is the filtering radius (also known as the length scale cutoff) \cite{Geu97, PSB01, GOP04, G86}.  However the LES filter we are interested in uses a Helmholtz-type differential equation.  In this paper, we will address the issue on how to deconvolve this Helmholtz filter so that we can resolve the smaller length scales below the spatial cutoff, $\delta$.  We are given a Helmholtz filter $G$ and a filtered solution $\bar{u}$, where $Gu = \bar{u}$ with $u$ being the desired solution, then we will use an iterative regularization algorithm to find an approximation $\tilde{u}$, such that $\tilde{u} \approx u$, and the error, $u - \tilde{u}$, is well contained.  The mathematical definition of the filtering process is given as follows,  
\begin{prob}[Noise Free Model Problem] \label{prob:noisefreeDiff} 
Let $X$ and $Y$ be Hilbert spaces.  Given a linear filter operator $G: X \rightarrow Y$ and a filtered signal $\bar{u} \in \text{Range}(G)$.  The noise free model problem is to find $u \in X$ which satisfies
\begin{equation} \label{EQ:TRUEPB}
  Gu = \bar{u}.
\end{equation}%
\end{prob}
\begin{prob}[Noisy Model Problem] \label{prob:noisyDiff}
Let $X$ and $Y$ be Hilbert spaces.  Given a linear filter operator $G: X \rightarrow Y$ and a filtered signal $\bar{u} \in \text{Range}(G)$ and noise $\epsilon \in Y$.
The noisy model problem is to find $u \in X$ satisfying
\begin{equation} \label{eqn:noisyproblem}
  Gu = \bar{u} + \epsilon.
\end{equation}
\end{prob}
We will mainly consider the case where the linear filter $G$ is a Helmholtz filter, hence solving either Problem \ref{prob:noisefreeDiff} or Problem \ref{prob:noisyDiff} becomes a deconvolution problem.  This kind of deconvolution problem is an important inverse problem \cite{G86, LT10, MM06, MS09, SAK01}.  This problem occurs in many applications including parameter identification \cite{EM09, EHN96}, the deconvolution problem of image processing \cite{BB98}, and the closure problem in turbulence modeling \cite{BIL06,Geu97,LL05, MS09}.  The deconvolution problem gets more complicated when noise is present in the filtering process.
It is known that if $G$ is compact and $Range(G)$ is infinite dimensional, then Problems \ref{prob:noisefreeDiff} and \ref{prob:noisyDiff} are ill-posed \cite{AN01, BK04, H94, N84, S07, V02, V82}.  Tikhonov-Lavrentiev regularization, a regularization method described further in Algorithm \ref{def:diffTikhonov}, which introduces a regularization parameter $\alpha$, is a well-established method used to solve Problems \ref{prob:noisefreeDiff} and \ref{prob:noisyDiff} \cite{MS09, LR12, KC79,EHN96,VV82}.  However, Tikhonov-Lavrentiev regularization is a general method which does not exploit the properties of the Helmholtz-type differential filter (with filtering radius $\delta$).  Therefore we introduce a new regularization method with a modification to iterative Tikhonov-Lavrentiev regularization, which in Theorem \ref{thmContErrorNoise} shows that it is able to exploit the properties of the Helmholtz filter $G$ and improves the error bounds for Problems \ref{prob:noisefreeDiff}. That is, a small algorithmic modification leads to a large improvement in the error bounds.

In section \ref{sec:prelim}, we introduce the necessary notation and inequalities which are used in the proofs of our theorems.  Section \ref{sec:mitlar} describes the Modified Iterative Tikhonov-Lavrentiev Regularization (Mitlar) algorithm in details and shows the error bounds in both the continuous case and discrete case.  Section \ref{sec:descent} provides an optimal stopping condition for the total number of iterations to counter the presence of noise in the the filtering process.  Finally, in section \ref{sec:numerics}, we show a number of numerical examples to verify the convergence rate, the optimal stopping condition and also compare the performance of our new algorithm to a family of existing Tikhonov-Lavrentiev regularization methods: original Tikhonov-Lavrentiev regularization, iterative Tikhonov-Lavrentiev regularization, and modified Tikhonov-Lavrentiev regularization.
\section{Preliminaries and notation}\label{sec:prelim}
Throughout this paper, we use the standard notation for Lebesgue and Sobolev spaces and their norms.  Also, $\Omega$ will be a regular, bounded, polyhedral domain in $\mathbb{R}^n$.  We define the following space
\begin{equation}
    X = H^1_0(\Omega)^d = \left\{ v \in L^2(\Omega)^d: \nabla v \in L^2 (\Omega)^{d\times d} \text{ and } v = 0 \text{ on } \partial \Omega\right\}.
\end{equation}
The norm $\| \cdot \|$ (when subscript is not present) will also denote the $L^2(\Omega)$ norm unless otherwise specified in a proof.  Similarly the inner product $(\cdot, \cdot)$ will denote the $L^2(\Omega)$ inner product.
We will use the notation $X^h \subset X$ to denote a finite dimensional subset of $X$.  An example of $X^h$ is the set of continuous polynomials of degree $k$.  We also assume that we have homogeneous boundary data throughout.
We use the following approximation inequalities, see \cite{BS94},
\begin{align}
  \inf_{v \in X^h} \| u - v \|_{L^2(\Omega)} & \leq C h^{k+1} \| u \|_{H^{k+1}(\Omega)}, \quad u \in H^{k+1}(\Omega)^n, \nonumber \\
  \inf_{v \in X^h} \| u - v \|_{H^1(\Omega)} & \leq C h^{k} \| u \|_{H^k(\Omega)}, \quad u \in H^{k+1}(\Omega)^n.\label{eqn:approxInequality}
\end{align}
Other well known inequalities used herein include:
\begin{itemize}
  \item Cauchy-Schwartz inequality: $| ( f,g ) | \leq \|f\| \|g\|, \quad \forall f,g \in L^2(\Omega)$.
  \item Young's inequality: $ ab \leq \frac{\epsilon}{p} a^p + \frac{\epsilon^{-q/p}}{q}b^q$, where $1 < p,q < \infty$, $\frac{1}{p} + \frac{1}{q} = 1$, $\epsilon > 0$, and $a,b \geq 0$.
  \item Poincare-Friedrich's inequality: $\|v \| \leq C_{PF} \| \nabla v \|, \quad \forall v \in X$.
  \item Triangle inequality: $\| a + b \| \leq \|a \| + \|b\|$.
\end{itemize}
\subsection{The differential filter}
The differential filter (also known as the Helmholtz-type differential filter) is used in multiple large eddy simulation models \cite{BIL06, G86, Geu97, LL05, MM06, MS09}.  This filter is equivalent to the Pao filter used in image processing \cite{LL05}.  
\begin{definition}[Differential filter]
The differential filter $G$ is defined as $Gu=\bar u$, where $u$ and $\bar u$ satisfy
\begin{equation}\label{eqn:differentialfilter}
-\delta^2 \Delta \bar u + \bar u = u \quad \text{ in } \Omega.
\end{equation}
\end{definition}
\begin{remark}[Variational differential filter]
The differential filter is equivalent (see \cite{MM06, MS09}) to the following variational formulation.  Find $\overline{u} \in H^1_0(\Omega)$ satisfying
\begin{equation} \label{eqnVarDiffFilter}
  \delta^ 2 ( \nabla \overline{u}, \nabla v ) + (\overline{u}, v) = (u,v),\quad \forall v \in X.
\end{equation}
\end{remark}
\begin{definition} [Discrete differential filter]
\label{defn:discdiffFilterVar}
  Let $X^h$ be a finite dimensional subspace of $X$.  We define $G^h:L^2(\Omega)^d \rightarrow X^h$ where
$\overline{u}^h = G^h u$ which is the unique solution in $X^h$ to 
\begin{equation}
\delta^2 \var{\Mh \overline{u}^h,\Mh v^h} + \var{\overline{u}^h,v^h} =
\var{u,v^h},\quad \forall v^h \in X^h.
\end{equation}
\end{definition}
Lemmas \ref{lemFilterStability}, \ref{lemFilterSelfAdjoint}, \ref{lemFilterConv}, and \ref{lemDiscFilterSelfAdjoint} are quoted from \cite{MM06} for completeness.
\begin{lemma}\label{lemFilterStability}
If $u \in L^2(\Omega)^d$, the following stability estimate for problem \eqref{eqnVarDiffFilter} holds:
\begin{equation}
  \delta^2 \| \nabla \overline{u} \|^2 + \frac{1}{2} \| \overline{u} \|^2 \leq \frac{1}{2} \| u \|^2.
\end{equation}
\end{lemma}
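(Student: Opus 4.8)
The plan is to carry out the standard energy estimate for the variational differential filter by testing the weak formulation against the solution itself. First I would take $v = \overline{u}$ in the variational formulation \eqref{eqnVarDiffFilter}, which is legitimate since $\overline{u} \in H^1_0(\Omega) = X$ (componentwise). This immediately yields the identity
\begin{equation*}
\delta^2 \| \nabla \overline{u} \|^2 + \| \overline{u} \|^2 = (u, \overline{u}).
\end{equation*}

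Next I would bound the right-hand side. Applying the Cauchy-Schwartz inequality gives $(u,\overline{u}) \leq \|u\|\,\|\overline{u}\|$, and then Young's inequality with $p = q = 2$ and $\epsilon = 1$ gives $\|u\|\,\|\overline{u}\| \leq \frac{1}{2}\|u\|^2 + \frac{1}{2}\|\overline{u}\|^2$. Substituting this into the identity above produces
\begin{equation*}
\delta^2 \| \nabla \overline{u} \|^2 + \| \overline{u} \|^2 \leq \tfrac{1}{2}\|u\|^2 + \tfrac{1}{2}\|\overline{u}\|^2.
\end{equation*}
Finally, absorbing the $\frac{1}{2}\|\overline{u}\|^2$ term by subtracting it from both sides leaves exactly $\delta^2 \| \nabla \overline{u} \|^2 + \frac{1}{2}\| \overline{u} \|^2 \leq \frac{1}{2}\|u\|^2$, which is the claimed estimate.

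There is no real obstacle here; the only minor point worth stating carefully is that the estimate is applied componentwise (or, equivalently, that the $L^2(\Omega)^d$ inner product splits as a sum over components), so that Cauchy-Schwartz and Young's inequality apply in the vector-valued setting without modification. The existence and uniqueness of $\overline{u}$ solving \eqref{eqnVarDiffFilter} is assumed from the cited references, so I would not re-derive it; I would only use that $\overline{u}$ is a valid test function.
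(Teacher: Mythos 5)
Your argument is correct: testing \eqref{eqnVarDiffFilter} with $v=\overline{u}$ and applying Cauchy--Schwartz and Young's inequality before absorbing $\tfrac12\|\overline{u}\|^2$ is exactly the standard energy estimate, and it yields the stated bound. Note that the paper itself gives no proof---Lemma \ref{lemFilterStability} is simply quoted from the cited reference---so your write-up supplies the expected argument and matches it in spirit.
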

\begin{lemma}\label{lemFilterSelfAdjoint}
The operator $G: L^2(\Omega)^d \rightarrow X$ is self-adjoint.
\end{lemma}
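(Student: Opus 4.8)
The plan is to verify the definition of self-adjointness directly. Since $G$ maps $L^2(\Omega)^d$ into $X \subset L^2(\Omega)^d$ and is bounded (the stability estimate of Lemma \ref{lemFilterStability} gives $\|Gu\| \le \|u\|$), $G$ is an everywhere-defined bounded operator on the Hilbert space $L^2(\Omega)^d$, so it suffices to show the symmetry relation $(Gu, w) = (u, Gw)$ for all $u, w \in L^2(\Omega)^d$.

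First I would set $\overline{u} = Gu$ and $\overline{w} = Gw$, so that both $\overline{u}$ and $\overline{w}$ lie in $X = H^1_0(\Omega)^d$ and satisfy the variational identity \eqref{eqnVarDiffFilter}, namely
\[
  \delta^2(\nabla \overline{u}, \nabla v) + (\overline{u}, v) = (u, v), \qquad \delta^2(\nabla \overline{w}, \nabla v) + (\overline{w}, v) = (w, v)
\]
for every $v \in X$. The key step is that $\overline{u}$ and $\overline{w}$ are themselves admissible test functions: choosing $v = \overline{w}$ in the first identity and $v = \overline{u}$ in the second gives
\[
  \delta^2(\nabla \overline{u}, \nabla \overline{w}) + (\overline{u}, \overline{w}) = (u, \overline{w}), \qquad \delta^2(\nabla \overline{w}, \nabla \overline{u}) + (\overline{w}, \overline{u}) = (w, \overline{u}).
\]
Then I would observe that the two left-hand sides coincide, because the bilinear form $a(\phi,\psi) := \delta^2(\nabla \phi, \nabla \psi) + (\phi, \psi)$ is symmetric (both the $L^2$ inner product and the gradient pairing are symmetric, and this extends componentwise to the vector-valued setting). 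Equating the right-hand sides yields $(u, \overline{w}) = (w, \overline{u})$, that is, $(u, Gw) = (Gu, w)$, which is exactly the claim.

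There is essentially no serious obstacle here; the only points requiring care are (i) confirming, as above, that $G$ is bounded and everywhere defined on $L^2(\Omega)^d$ so that "self-adjoint" is meant in the standard Hilbert-space sense, and (ii) recalling that well-posedness of \eqref{eqnVarDiffFilter} — Lax–Milgram applied to the coercive, bounded, symmetric form $a$ on $X$ — guarantees that $\overline{u}$ and $\overline{w}$ exist, are unique, and belong to $X$, so that substituting them as test functions in the variational identities is legitimate.
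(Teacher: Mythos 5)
Your argument is correct: testing the variational identity for $\overline{u}=Gu$ with $\overline{w}=Gw$ and vice versa, then using the symmetry of the bilinear form $\delta^2(\nabla\cdot,\nabla\cdot)+(\cdot,\cdot)$, is the standard proof of this fact, and your attention to boundedness (via Lemma \ref{lemFilterStability}) and to well-posedness of \eqref{eqnVarDiffFilter} so that $\overline{u},\overline{w}\in X$ are legitimate test functions covers the only delicate points. The paper itself does not prove this lemma but quotes it from the reference \cite{MM06}, and your proof is essentially the argument given there, so there is nothing further to reconcile.
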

\begin{lemma}\label{lemFilterConv}
 If $\nabla u \in  L^2(\Omega)^d$ and $\overline{u}$ satisfies \eqref{eqnVarDiffFilter}, then
 \begin{equation}
  \frac{\delta^2}{2} \| \nabla( u - \overline{u} ) \|^2 + \| u - \overline{u} \|^2 \leq \frac{\delta^2}{2} \| \nabla u \|^2.
 \end{equation}
 If, additionally $\Delta u \in L^2(\Omega)^d$, then
  \begin{equation}
  \delta^2 \| \nabla( u - \overline{u} ) \|^2 + \frac{1}{2} \| u - \overline{u} \|^2 \leq \delta^4 \| \Delta u \|^2.
 \end{equation}
\end{lemma}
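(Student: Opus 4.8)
The plan is an energy argument: test the variational form \eqref{eqnVarDiffFilter} of the filter against the error $v = u - \overline{u}$ itself. This is legitimate because $\overline{u} \in X$ and, under the hypothesis $\nabla u \in L^2(\Omega)^d$ together with the standing assumption of homogeneous boundary data, $u \in X$, so $u - \overline{u} \in X$. First I would rewrite \eqref{eqnVarDiffFilter} as an identity for the error: subtracting $(\overline{u},v)$ from both sides and substituting $\nabla \overline{u} = \nabla u - \nabla(u-\overline{u})$ gives, for every $v \in X$,
\[
  (u - \overline{u}, v) + \delta^2 (\nabla(u-\overline{u}), \nabla v) = \delta^2 (\nabla u, \nabla v).
\]
Choosing $v = u - \overline{u}$ produces the energy identity
\[
  \| u - \overline{u} \|^2 + \delta^2 \| \nabla(u-\overline{u}) \|^2 = \delta^2 (\nabla u, \nabla(u-\overline{u})).
\]

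For the first inequality I would bound the right-hand side by Cauchy--Schwarz and then Young's inequality with $p=q=2$ and $\epsilon = 1$, namely $\delta^2(\nabla u, \nabla(u-\overline{u})) \le \tfrac{\delta^2}{2}\|\nabla u\|^2 + \tfrac{\delta^2}{2}\|\nabla(u-\overline{u})\|^2$, and then absorb the term $\tfrac{\delta^2}{2}\|\nabla(u-\overline{u})\|^2$ into the left-hand side. This yields exactly $\|u-\overline{u}\|^2 + \tfrac{\delta^2}{2}\|\nabla(u-\overline{u})\|^2 \le \tfrac{\delta^2}{2}\|\nabla u\|^2$.

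For the second inequality, the additional hypothesis $\Delta u \in L^2(\Omega)^d$ lets me integrate by parts in the right-hand side of the same energy identity. Since $u - \overline{u} \in H^1_0(\Omega)^d$ the boundary term vanishes, so $\delta^2(\nabla u, \nabla(u-\overline{u})) = -\delta^2(\Delta u, u - \overline{u})$; applying Cauchy--Schwarz and Young's inequality, this time pairing $\|u-\overline{u}\|$ against $\delta^2\|\Delta u\|$, gives $-\delta^2(\Delta u, u-\overline{u}) \le \tfrac12\|u-\overline{u}\|^2 + \tfrac{\delta^4}{2}\|\Delta u\|^2$. Absorbing $\tfrac12\|u-\overline{u}\|^2$ on the left then gives $\tfrac12\|u-\overline{u}\|^2 + \delta^2\|\nabla(u-\overline{u})\|^2 \le \tfrac{\delta^4}{2}\|\Delta u\|^2$, which implies the stated bound $\delta^2\|\nabla(u-\overline{u})\|^2 + \tfrac12\|u-\overline{u}\|^2 \le \delta^4\|\Delta u\|^2$.

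There is no serious obstacle here; the only point requiring care is the integration by parts in the last step, which presupposes enough regularity to interpret the pairing $(\Delta u, u - \overline{u})$ on the polyhedral domain $\Omega$ — but this is standard, and the hypothesis $\Delta u \in L^2(\Omega)^d$ is precisely what licenses it. Everything else reduces to the energy identity plus Cauchy--Schwarz and Young's inequality, both recalled in Section~\ref{sec:prelim}.
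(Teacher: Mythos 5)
Your proof is correct. Note that the paper does not prove Lemma \ref{lemFilterConv} at all --- it is quoted from \cite{MM06} --- and your argument (test \eqref{eqnVarDiffFilter} with $v = u-\overline{u}$, which is admissible under the standing homogeneous boundary assumption, then Cauchy--Schwarz and Young for the first bound and an integration by parts against $\Delta u$ for the second, in fact yielding the slightly stronger constant $\delta^4/2$) is the standard energy proof of that cited result, so there is nothing to reconcile.
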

\begin{lemma}\label{lemDiscFilterSelfAdjoint}
The operator $G^h:L^2(\Omega)^d \rightarrow X^h$ is self-adjoint and positive semi-definite on $L^2(\Omega)$ and positive definite on $X^h$.
\end{lemma}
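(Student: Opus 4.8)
The plan is to reduce everything to elementary properties of the symmetric bilinear form $a(w,v):=\delta^2(\nabla w,\nabla v)+(w,v)$ on $X^h\times X^h$ together with the defining variational identity of $G^h$. First I would record that $a$ is symmetric, bounded, and coercive on $X^h$: indeed $a(v^h,v^h)=\delta^2\|\nabla v^h\|^2+\|v^h\|^2\ge \|v^h\|^2$, so by Lax--Milgram $G^h u$ is the unique element of $X^h$ with $a(G^h u,v^h)=(u,v^h)$ for all $v^h\in X^h$; this is exactly Definition \ref{defn:discdiffFilterVar}, and it shows that $G^h$ is a well-defined linear map $L^2(\Omega)^d\to X^h$. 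One should note here that $X^h\subset X=H^1_0(\Omega)^d$, so that all the gradient terms appearing below make sense.

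For self-adjointness, given $u,w\in L^2(\Omega)^d$ set $\overline u^h=G^h u$ and $\overline w^h=G^h w$. Since $\overline w^h\in X^h$ is an admissible test function in the defining identity for $\overline u^h$, and $\overline u^h\in X^h$ in the identity for $\overline w^h$, I obtain $(u,G^h w)=(u,\overline w^h)=a(\overline u^h,\overline w^h)=a(\overline w^h,\overline u^h)=(w,\overline u^h)=(G^h u,w)$, the middle equality being the symmetry of $a$. Hence $G^h$ is self-adjoint.

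For positivity, I would take $v^h=\overline u^h=G^h u$ as the test function in the defining identity to get $(G^h u,u)=(u,\overline u^h)=a(\overline u^h,\overline u^h)=\delta^2\|\nabla\overline u^h\|^2+\|\overline u^h\|^2\ge 0$, which is positive semi-definiteness on $L^2$. Equality forces $\overline u^h=0$, hence $(u,v^h)=0$ for every $v^h\in X^h$; this can occur for a nonzero $u\in L^2(\Omega)^d$ (any $u$ in the $L^2$-orthogonal complement of $X^h$), so on all of $L^2$ the filter is only semi-definite. If, however, $u\in X^h$, then choosing $v^h=u$ in $(u,v^h)=0$ gives $\|u\|^2=0$, so $u=0$; therefore $(G^h u,u)>0$ for every nonzero $u\in X^h$, i.e.\ $G^h$ is positive definite on $X^h$.

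I do not expect any genuine obstacle here: the whole argument is standard Galerkin/Lax--Milgram bookkeeping. The only point that deserves a little care is explaining \emph{why} positivity degenerates to mere semi-definiteness on $L^2(\Omega)^d$ while remaining strict on $X^h$ --- the resolution being that the test space is only $X^h$, so $\ker G^h$ is precisely the $L^2$-orthogonal complement of $X^h$, which is trivial once the argument is restricted to $X^h$ itself.
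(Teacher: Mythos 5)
Your proof is correct: the Lax--Milgram/Galerkin argument (symmetry and coercivity of $a(w,v)=\delta^2(\nabla w,\nabla v)+(w,v)$ on $X^h$, self-adjointness by testing each defining identity with the other filtered function, semi-definiteness by testing with $\overline u^h$ itself, and strict positivity on $X^h$ via $\overline u^h=0\Rightarrow(u,v^h)=0$ for all $v^h\in X^h$) is exactly the standard proof. Note that the paper itself gives no proof of this lemma --- it is quoted from the reference \cite{MM06} for completeness --- so there is nothing to contrast with; your write-up, including the remark that $\ker G^h$ is the $L^2$-orthogonal complement of $X^h$, which explains why definiteness holds only on $X^h$, is precisely the expected argument.
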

\subsection{Tikhonov regularization}
A major tool which is often used to solve inverse problems is the Tikhonov regularization \cite{TA77, TA79, EHN96}.  In order to solve an ill-posed linear system $Gu = \bar{u}$, a general Tikhonov regularization will try to find the minimizer of the following problem, 
\begin{equation}\label{eq:gen_tik}
  u_T = \argmin{u}\{\norm{Gu - \bar{u}}^2 + \norm{\Gamma u}^2\}
\end{equation}
where $\Gamma$ is a suitably chosen linear operator, called Tikhonov operator (or Tikhonov matrix when $G$ is a matrix).  The Tikhonov minimizer of \eqref{eq:gen_tik}, $u_T$, has a closed form expression, 
\begin{equation}\label{eq:gen_tik_soln}
  u_T = (G^*G + \Gamma^*\Gamma)^{-1}G^*u
\end{equation}
where $G^*$ and $\Gamma^*$ are Hermitian transposes of $G$ and $\Gamma$ respectively.  In most cases, $\Gamma$ is picked as a multiple of the identity operator, that is, $\Gamma = \alpha I$.  In this case, \eqref{eq:gen_tik_soln} simplifies down to, 
\begin{equation}\label{eq:tik_soln}
  u_T = (G^*G + \alpha^2 I)^{-1}G^*u
\end{equation}
If the operator $G$ is monotone\footnote{In the case of $G$ being the Helmholtz filter operator, $G$ is self-adjoint and positive semi-definite, hence monotone.}, then instead of solving $Gu = \bar{u}$ with the the perturbed normal equation from Tikhonov regularization, Tikhonov-Lavrentiev regularization can be used.  This is known in the literature as the method of Lavrentiev Regularization \cite{L67} or the method of Singular Perturbation \cite{LN96},
\begin{definition}[Tikhonov-Lavrentiev Regularization] \label{def:diffTikhonov}
  Choose a regularization parameter $\alpha > 0$.  Solve for $u_0$ satisfying
\begin{align*}
  ( G + \alpha I ) u_0 &= \overline{u}, \quad \text{in } \Omega.
\end{align*}
\end{definition}
This regularization method can be improved by an iterative method.  
\begin{definition}[Iterated Tikhonov-Lavrentiev regularization]
Choose a regularization parameter $\alpha>0$ and fix the number of updates $J \geq 1$.  The iterative Tikhonov-Lavrentiev approximations $u_j$ $(0 \leq j \leq J)$ are found by solving
\begin{align*}
  ( G + \alpha I ) u_0 &= \overline{u}, \quad \text{in } \Omega,\\
  ( G + \alpha I ) ( u_j - u_{j-1} ) &= \overline{u} - G u_{j-1}, \quad \text{in } \Omega.
\end{align*}
\end{definition}
Given a source condition, it is shown in \cite{EHN96, KC79} that Tikhonov-Lavrentiev and iterative Tikhonov-Lavrentiev regularization converge to $u$ as $\alpha \rightarrow 0$ (in the noise free case) and as $\epsilon \rightarrow 0$ and $\alpha \rightarrow 0$ (in the noisy case).
\begin{theorem}[Error bound of Tikhonov-Lavrentiev regularization]
  Suppose that $G$ is non-negative definite.  Fix $\alpha>0$.  Let $e_j = u - u_j$ for all $j = 0, \dots, J$.  Suppose, for some $\beta\geq 0$ that $u\in Range(G^\beta$) and the noise is bounded $\| \epsilon \| \leq \epsilon_0 < \infty$.  Then, there exists a constant $C(J) < \infty$ such that, for any $0 \leq J \leq \beta$,
\begin{equation}
  \| e_J \| \leq \frac{(J+1)\epsilon_0}{\alpha} + \alpha^{J+1} C(J).
\end{equation}
Moreover, if $\alpha = \alpha(\epsilon_0) = C \epsilon_0^{1/(J+2)}$ we have that $\| e_J \| \leq C \epsilon_0^{1 - 1/(J+2)}$.
\end{theorem}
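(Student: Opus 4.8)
The plan is to convert the recursive definition of the iterates $u_j$ into a closed form for the errors $e_j = u - u_j$ and then estimate it through the functional calculus of the self-adjoint, non-negative operator $G$. I take the datum fed to the algorithm to be the noisy signal $\bar u + \epsilon$ (so that $\epsilon = 0$ recovers the noise-free case) and use $Gu = \bar u$; a different sign convention for $\epsilon$ only flips signs below. Writing $(G+\alpha I)u = Gu + \alpha u$, subtracting the defining equations, and using the identity $u_j - u_{j-1} = e_{j-1} - e_j$ gives
\begin{align*}
  (G+\alpha I)e_0 &= \alpha u - \epsilon, \\
  (G+\alpha I)e_j &= \alpha e_{j-1} - \epsilon, \qquad 1 \le j \le J.
\end{align*}
Setting $R_\alpha = (G+\alpha I)^{-1}$ and unrolling the recursion yields the closed form
\begin{equation*}
  e_J = \alpha^{J+1} R_\alpha^{\,J+1} u \;-\; \sum_{k=0}^{J} \alpha^{k} R_\alpha^{\,k+1} \epsilon .
\end{equation*}

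I would then bound the two contributions separately. Since $G$ is self-adjoint and non-negative (Lemmas \ref{lemFilterSelfAdjoint} and \ref{lemDiscFilterSelfAdjoint} in the relevant cases), its spectrum lies in $[0, \|G\|]$, so the functional calculus gives $\|\alpha^{k} R_\alpha^{\,k+1}\| = \sup_{\lambda \ge 0} \alpha^{k}(\lambda+\alpha)^{-(k+1)} \le \alpha^{-1}$ for each $k$; summing the $J+1$ noise terms and using $\|\epsilon\| \le \epsilon_0$ produces the first term $(J+1)\epsilon_0/\alpha$. For the bias term I would invoke the source condition $u = G^{\beta} w$: since $J \le \beta$, one may peel off the extra smoothness and write $\alpha^{J+1} R_\alpha^{\,J+1} u = (\alpha G R_\alpha)^{J+1} G^{\beta - J - 1} w$; because $\|\alpha G R_\alpha\| = \sup_{\lambda \ge 0} \alpha\lambda(\lambda+\alpha)^{-1} \le \alpha$, this is at most $\alpha^{J+1}$ times a constant depending only on $\|G\|$, $\beta$, $J$, and $\|w\|$. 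Absorbing that constant into $C(J)$ and adding the two estimates yields $\|e_J\| \le (J+1)\epsilon_0/\alpha + \alpha^{J+1} C(J)$.

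For the a priori choice $\alpha = \alpha(\epsilon_0) = C \epsilon_0^{1/(J+2)}$ I would simply substitute: the first term becomes $\tfrac{J+1}{C}\,\epsilon_0^{\,1-1/(J+2)}$, and since $(J+1)/(J+2) = 1 - 1/(J+2)$ the second becomes $C^{J+1} C(J)\,\epsilon_0^{\,1-1/(J+2)}$, so both scale like $\epsilon_0^{\,1-1/(J+2)}$ and $\|e_J\| \le C' \epsilon_0^{\,1-1/(J+2)}$ with a new constant. The only genuinely delicate step is the functional-calculus estimate of the bias term: one must handle the factor $G^{\beta - J - 1}$ so that the constant $C(J)$ is finite and independent of $\alpha$ and $\epsilon_0$ (this is exactly where the hypothesis $J \le \beta$ enters), and one needs the spectral calculus to be available at all, which rests on $G$ being self-adjoint and non-negative — true here because of the Helmholtz structure of the filter.
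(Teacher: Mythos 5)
The paper itself offers no proof of this theorem: it is quoted as background, with the argument deferred to the cited literature \cite{EHN96,KC79,VV82}. So there is no in-paper proof to compare against; your route --- closed-form error via the resolvent $R_\alpha=(G+\alpha I)^{-1}$ plus the spectral calculus of the (self-adjoint, non-negative) filter --- is exactly the standard argument in those references. Most of it is sound: the error recursion $(G+\alpha I)e_0=\alpha u-\epsilon$, $(G+\alpha I)e_j=\alpha e_{j-1}-\epsilon$, the closed form $e_J=\alpha^{J+1}R_\alpha^{J+1}u-\sum_{k=0}^{J}\alpha^k R_\alpha^{k+1}\epsilon$, the bound $\|\alpha^k R_\alpha^{k+1}\|\le 1/\alpha$ yielding the $(J+1)\epsilon_0/\alpha$ term, and the substitution $\alpha=C\epsilon_0^{1/(J+2)}$ are all correct. (The spectral-calculus steps do use self-adjointness, which the bare hypothesis ``non-negative definite'' does not literally state, but which holds for the Helmholtz filter, as you note.)

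The genuine gap is in the bias estimate. The factorization $\alpha^{J+1}R_\alpha^{J+1}u=(\alpha G R_\alpha)^{J+1}G^{\beta-J-1}w$ is only legitimate when $\beta-J-1\ge 0$: for $J\le\beta<J+1$ --- in particular $\beta=J$, which the stated hypothesis $0\le J\le\beta$ explicitly allows --- the factor $G^{\beta-J-1}$ is a negative power of $G$, hence an unbounded operator here (for the differential filter $G^{-1}=I-\delta^2\Delta$), and $w$ need not lie in its domain. So your parenthetical claim that ``this is exactly where the hypothesis $J\le\beta$ enters'' is not right: the peeling step needs $\beta\ge J+1$. What the spectral calculus actually gives under $u=G^\beta w$ is $\|\alpha^{J+1}R_\alpha^{J+1}u\|\le \sup_{0\le\lambda\le\|G\|}\alpha^{J+1}\lambda^\beta(\lambda+\alpha)^{-(J+1)}\,\|w\|\le C\,\alpha^{\min(\beta,\,J+1)}\|w\|$, i.e.\ the claimed rate $\alpha^{J+1}$ only when $\beta\ge J+1$, and only $\alpha^\beta$ (e.g.\ $\alpha^{J}$ when $\beta=J$) at the boundary of the stated hypothesis --- this is the usual qualification/saturation phenomenon for iterated Lavrentiev regularization. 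To be fair, this looseness is inherited from the theorem as quoted (the cited results require $\beta\ge J+1$ or state the rate as $\alpha^{\min(\beta,J+1)}$), so your proof is complete under that reading; but as written the bias step fails exactly in the borderline case the hypothesis permits, and the clean fix is to estimate the spectral function $\alpha^{J+1}\lambda^\beta(\lambda+\alpha)^{-(J+1)}$ directly, splitting the cases $\beta\ge J+1$ and $\beta<J+1$, rather than factoring out $G^{\beta-J-1}$.
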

The error bounds for Tikhonov-Lavrentiev regularization are similar to that of Tikhonov regularization, see \cite{KC79,EHN96,VV82}.
\section{Modification to iterative Tikhonov-Lavrentiev regularization}\label{sec:mitlar}
Algorithm \ref{alg:mitlar} describes the modification which we add to iterative Tikhonov-Lavrentiev Regularization to develop our new algorithm, the Modified Iterative Tikhonov-Lavrentiev Regularization (Mitlar).  We analyze the error in the continuous case by separating it into the following components: the regularization error in the Mitlar algorithm and the amount of noise amplification due to our regularization.  We then discretize Mitlar in Algorithm \ref{algo:varmitlardisc}. We analyze the error in the discretized case by separating it into the following components: the regularization error in the continuous Mitlar algorithm, the discretization error in the solution, and the discretized noise amplification due to the discrete Mitlar algorithm.
\begin{algorithm} (Modified Iterated Tikhonov-Lavrentiev Regularization [Mitlar]) \label{alg:mitlar}
Given the convolved data $\bar{u}$, to solve for $u$ satisfying $Gu = \bar{u}$ (the noise free model) or $Gu = \bar{u} + \epsilon$ (the noisy model), we fix the maximum number of iterations $J \geq 1$ and regularization
parameter $\alpha > 0$.  Solve for $u_0$ satisfying
\begin{equation}\label{eqn:mitlar:1}
  [(1-\alpha)G+\alpha I]u_0=\bar u.
\end{equation}
Then for $j=1,...,J$, solve for $u_j$ satisfying
\begin{equation}\label{eqn:mitlar:2}
  [(1-\alpha)G+\alpha I](u_j-u_{j-1})=\bar u-Gu_{j-1}.
\end{equation}
\end{algorithm}
We define the following regularization operators $D_{\alpha}$ and $D_{\alpha, j}$ for convenience of notation.
\begin{definition}
  For $\alpha > 0$ define the modified Tikhonov-Lavrentiev operator $D_\alpha$ to be
\begin{equation} 
  D_\alpha  = [(1-\alpha)G+\alpha I]^{-1}.
\end{equation}
For $j > 0$, define the $j^{th}$ modified iterative Tikhonov-Lavrentiev operator $D_{\alpha,j}$ by
\begin{equation}
  D_{\alpha,j} \overline{u} = u_j,
\end{equation}
where $u_j$ is obtained via Algorithm \ref{alg:mitlar}.
\end{definition}
\begin{remark} [Variational formulation of Mitlar]
Assume $G$ is the differential filter defined in \eqref{eqnVarDiffFilter}. Algorithm \ref{alg:mitlar} is equivalent  to the following variational formulation.  Given $u\in L^2(\Omega)$, then $u_J = D_J \bar{u}$ is the unique solution to the following equations
\begin{align}\label{alg:varmitlar}
\alpha \delta^2 \var{\nabla u_0,\nabla v} + \var{u_0,v} 
  &=  \delta^2\var{\nabla \bar{u},\nabla v} + \var{\bar{u}, v}, \quad \forall v \in X, \text{ and} \\
\alpha \delta^2 \var{\nabla u_j,\nabla v} + \var{u_j,v} 
  &=  \delta^2\var{\nabla \bar{u},\nabla v} + \var{\bar{u}, v} + \alpha \delta^2 \var{\nabla u_{j-1},\nabla v},\quad \forall v \in X. \nonumber
\end{align}
\end{remark}
Theorem \ref{thmContErrorNoise} shows that this modification to Tikhonov-Lavrentiev regularization provides a higher order deconvolution error compared to iterated Tikhonov-Lavrentiev regularization.  The following lemmas and propositions are needed for the proof of the error bound in Theorem \ref{thmContErrorNoise}.
\begin{lemma} \label{lem:DGfuncs}
For $0 \leq \alpha \leq 1$, the function $f(x) = ((1-\alpha)x + \alpha)^{-1}$ maps the interval $(0,1]$ to $[1,\frac{1}{\alpha})$, and the function $g(x) = x((1-\alpha)x + \alpha)^{-1}$ maps the interval $(0,1]$ to $(0,1]$.
\end{lemma}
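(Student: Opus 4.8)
The plan is to reduce both assertions to elementary monotonicity and boundedness properties of the affine function $h(x) = (1-\alpha)x + \alpha$ that appears in the denominator of $f$ and $g$. First I would record the basic facts about $h$ on $[0,1]$: since $0 \le \alpha \le 1$ the slope $1-\alpha$ is non-negative, so $h$ is non-decreasing on $[0,1]$ (strictly increasing when $\alpha < 1$); moreover $h(0) = \alpha$, $h(1) = 1$, and $h(x) > 0$ for every $x \in (0,1]$. By continuity and monotonicity, $h$ therefore carries $(0,1]$ onto $(\alpha, 1]$ when $0 < \alpha < 1$, with the value $\alpha$ approached as $x \to 0^+$ but never attained; the case $\alpha = 0$ gives $h(x)=x$ and $h((0,1])=(0,1]$, and $\alpha = 1$ is the trivial constant case $h\equiv 1$.

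For the statement about $f(x) = h(x)^{-1}$, I would then compose with the strictly decreasing map $t \mapsto 1/t$ on $(0,\infty)$. Since $h$ sends $(0,1]$ onto $(\alpha,1]$, the composition $f$ sends $(0,1]$ onto $[1, \tfrac{1}{\alpha})$: the left endpoint $1 = f(1)$ is attained at $x = 1$, while $\tfrac{1}{\alpha} = \lim_{x\to 0^+} f(x)$ is a supremum that is not attained. Reading $\tfrac{1}{\alpha} = +\infty$ when $\alpha = 0$ gives the range $[1,\infty)$, consistent with the statement.

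For $g(x) = x\,h(x)^{-1}$ the cleanest route is the rewriting, valid for $x \in (0,1]$,
\[
  g(x) = \frac{x}{(1-\alpha)x + \alpha} = \frac{1}{(1-\alpha) + \alpha/x}.
\]
As $x$ ranges over $(0,1]$ the quantity $\alpha/x$ ranges over $[\alpha, +\infty)$ (when $\alpha>0$), so the denominator $(1-\alpha)+\alpha/x$ ranges over $[1,+\infty)$, and hence $g$ ranges over $(0,1]$, with $g(1)=1$ attained and the value $0$ approached as $x\to 0^+$. Alternatively one can differentiate directly, obtaining $g'(x) = \alpha\,h(x)^{-2} \ge 0$, so $g$ is non-decreasing on $[0,1]$ with $g(0)=0$ and $g(1)=1$, which yields the same conclusion; the case $\alpha=0$ gives $g\equiv 1\in(0,1]$.

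I do not expect any substantive obstacle in this lemma. The only points that need care are the bookkeeping of open versus closed endpoints — specifically that $\tfrac{1}{\alpha}$ is a non-attained supremum of $f$ and $0$ a non-attained infimum of $g$ — and the degenerate boundary values $\alpha \in \{0,1\}$, each of which is dispatched by the brief remarks above.
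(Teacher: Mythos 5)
Your proposal is correct and follows essentially the same route as the paper's proof: the bound for $f$ comes from viewing $(1-\alpha)x+\alpha$ as a convex combination lying in $(\alpha,1]$, and the bound for $g$ from the sign of $g'(x)=\alpha((1-\alpha)x+\alpha)^{-2}$ together with the boundary values $g(0)=0$, $g(1)=1$. The only difference is your extra (harmless) bookkeeping of attained versus non-attained endpoints and of the degenerate cases $\alpha\in\{0,1\}$, which the paper passes over silently.
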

\begin{proof}
The term $(1-\alpha)x + \alpha$ is a convex combination of $x$ and $1$, so 
\begin{align*}
&\alpha < (1-\alpha)x + \alpha \leq 1, \text{ and}\\
&1 \leq \frac{1}{(1-\alpha)x + \alpha} < \frac{1}{\alpha}.
\end{align*}
For the bounds on $g(x)$, consider
\begin{equation*}
g'(x) = \alpha ((1-\alpha)x + \alpha)^{-2}.
\end{equation*}
So $g'(x)$ has no critical points in the interval $(0,1)$.  Therefore $g(x)$ attains its extrema on the boundary of $[0,1]$.  Note that  $g(1) = 1$, and $g(x) = 0$ if and only if $x = 0$.  Therefore $g: (0,1] \rightarrow (0,1]$.
\end{proof}
\begin{lemma}
  For $0 \leq \alpha \leq 1$, the operators $D_\alpha$, $D_\alpha G$, and $I -D_\alpha G$  are bounded.  In particular, they satisfy
\begin{equation} \label{eqnDalphaBound}
  \| D_\alpha \| \leq \frac{1}{\alpha}, \quad \| D_\alpha G \| \leq 1, \text{ and }  \| I - D_\alpha G \| \leq 1.
\end{equation}
\end{lemma}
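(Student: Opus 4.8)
The plan is to pass all three bounds through the continuous functional calculus for the self-adjoint filter $G$, which reduces the operator-norm estimates to the scalar estimates already in Lemma \ref{lem:DGfuncs}. First I would record the spectral picture: by Lemma \ref{lemFilterSelfAdjoint} the operator $G$ is self-adjoint, it is non-negative (Helmholtz filter), and by Lemma \ref{lemFilterStability}, dropping the non-negative gradient term one gets $\|Gu\| \le \|u\|$, so $\|G\| \le 1$; hence $\sigma(G) \subseteq [0,1]$. Since $0 < \alpha \le 1$, the operator $(1-\alpha)G + \alpha I$ is self-adjoint and satisfies $(1-\alpha)G + \alpha I \ge \alpha I$, so it is boundedly invertible and $D_\alpha = [(1-\alpha)G+\alpha I]^{-1}$ is a well-defined bounded operator. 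In the notation of Lemma \ref{lem:DGfuncs} we then have $D_\alpha = f(G)$, $D_\alpha G = g(G)$, and $I - D_\alpha G = (1-g)(G)$, where $1-g$ extends continuously to $[0,1]$.

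Next I would invoke the standard fact that for a bounded self-adjoint operator $A$ and a continuous function $\phi$ on an interval containing $\sigma(A)$ one has $\|\phi(A)\| = \sup_{t \in \sigma(A)} |\phi(t)| \le \sup_{t \in [0,1]} |\phi(t)|$. Applying this with $\phi = f$, $g$, and $1-g$ in turn, it remains only to bound these suprema over $[0,1]$. Lemma \ref{lem:DGfuncs} gives $f((0,1]) = [1,\tfrac{1}{\alpha})$ and $g((0,1]) = (0,1]$; extending continuously, $f(0) = \tfrac{1}{\alpha}$ and $g(0) = 0$, so $\sup_{[0,1]} f = \tfrac{1}{\alpha}$ and $\sup_{[0,1]} |g| = 1$, which yields $\|D_\alpha\| \le \tfrac{1}{\alpha}$ and $\|D_\alpha G\| \le 1$. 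For the third bound, $1 - g(x) = \alpha(1-x)\big((1-\alpha)x+\alpha\big)^{-1}$ is non-negative on $[0,1]$, equals $1$ at $x=0$ and $0$ at $x=1$, and is decreasing there because $g$ is increasing (its derivative $g'(x) = \alpha((1-\alpha)x+\alpha)^{-2}$ is positive, as used in Lemma \ref{lem:DGfuncs}); hence $\sup_{[0,1]} |1-g| = 1$ and $\|I - D_\alpha G\| \le 1$.

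For readers who prefer an argument free of the functional calculus, I would note that $\|D_\alpha\| \le \tfrac{1}{\alpha}$ follows directly from testing $[(1-\alpha)G+\alpha I]v = w$ with $v := D_\alpha w$: since $(Gv,v) \ge 0$ and $1-\alpha \ge 0$, one gets $\alpha \|v\|^2 \le (w,v) \le \|w\|\,\|v\|$, so $\|D_\alpha w\| \le \|w\|/\alpha$; and $\|I - D_\alpha G\| \le 1$ follows operator-theoretically from the identity $I - D_\alpha G = \alpha D_\alpha(I-G)$ together with $0 \le D_\alpha G$. I would still present the functional-calculus version as the main proof since it treats all three operators uniformly.

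I do not expect a real obstacle here; the one point that needs a little care is that $G$ is compact with infinite-dimensional range, so $0$ may in fact belong to $\sigma(G)$. This is precisely why I take the suprema over the closed interval $[0,1]$ rather than over $(0,1]$, and why I check that $f$, $g$, and $1-g$ extend continuously across $x=0$ with values $\tfrac{1}{\alpha}$, $0$, and $1$ respectively; with that observation in place the estimates of Lemma \ref{lem:DGfuncs} transfer verbatim to the operator norms.
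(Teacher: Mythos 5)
Your proof is correct and follows essentially the same route as the paper's: both reduce the three operator bounds to the scalar bounds of Lemma \ref{lem:DGfuncs} applied, via the spectral theorem for the self-adjoint filter $G$, to the spectra of $D_\alpha$, $D_\alpha G$, and $I-D_\alpha G$. Your extra care in extending $f$, $g$, and $1-g$ continuously to $x=0$ is in fact a small refinement, since the paper asserts $\sigma(G)\subseteq(0,1]$ whereas $0$ does belong to the spectrum of the compact operator $G$; the stated bounds are unaffected.
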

\begin{proof}
The method of proof is similar to that employed in \cite{MS09}.  
The differential filter operator $G$ has a spectrum that lies in $(0,1]$. 
Therefore by Lemma \ref{lem:DGfuncs}, the spectrum of $D_\alpha = ((1- \alpha)G + \alpha I)^{-1}$ lies between $[1, \frac{1}{\alpha})$.  
Also by Lemma \ref{lem:DGfuncs}, the spectrum of $ D_\alpha G = ((1- \alpha)G + \alpha I)^{-1} G$ lies between (0,1].  Similarly, the spectrum of $I - D_\alpha G$ lies between [0,1).
\end{proof}
\begin{proposition} \label{prop:continuouserror}
  The error equation $e_J = u - u_J$ is given by
\begin{equation} \label{eqn:continuouserrorequation}
  e_J = (-\alpha \delta^2)^{J+1} (D_\alpha G)^{J+1}(\Delta^{J+1}u),
\end{equation}
and the error is bounded
\begin{equation} \label{eqn:continuouserrorbound}
  \| e_J \| \leq (\alpha \delta^2)^{J+1} \| \Delta^{J+1} u\|.
\end{equation}
\end{proposition}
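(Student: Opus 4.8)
The plan is to argue entirely at the level of operators that are functions of the Laplacian, using that $G$, $D_\alpha$ and $\Delta$ all commute (they are all functions of $-\Delta$ with homogeneous boundary conditions, so the spectral calculus from the preceding lemmas applies). First I would record the algebraic identities that drive everything. Since the differential filter \eqref{eqn:differentialfilter} reads $(-\delta^2\Delta + I)\bar u = u$, the operator $G$ is exactly the inverse of $-\delta^2\Delta + I$, so $G^{-1} = I - \delta^2\Delta$, equivalently
\[
  -\delta^2\Delta G = I - G = -\delta^2 G\Delta .
\]
Combining this with $D_\alpha = [(1-\alpha)G+\alpha I]^{-1}$ gives, since $D_\alpha[(1-\alpha)G+\alpha I] = I$,
\[
  I - D_\alpha G = D_\alpha\big[(1-\alpha)G + \alpha I - G\big] = \alpha D_\alpha(I - G) = -\alpha\delta^2 D_\alpha G\Delta .
\]

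Next I would set up the error recursion from Algorithm \ref{alg:mitlar} in the noise-free case, where $\bar u = Gu$. Equation \eqref{eqn:mitlar:1} gives $u_0 = D_\alpha\bar u = D_\alpha G u$, hence $e_0 = u - u_0 = (I - D_\alpha G)u$. Rewriting \eqref{eqn:mitlar:2} as
\[
  u_j - u_{j-1} = D_\alpha(\bar u - Gu_{j-1}) = D_\alpha G(u - u_{j-1}) = D_\alpha G\,e_{j-1},
\]
and subtracting both sides from $u - u_{j-1}$ wait—subtracting this from $u$ written as $u = u_{j-1} + e_{j-1}$, one obtains $e_j = e_{j-1} - D_\alpha G\,e_{j-1} = (I - D_\alpha G)e_{j-1}$. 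Iterating down to $e_0$ yields $e_J = (I - D_\alpha G)^{J+1}u$.

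Finally I would combine the two ingredients. Substituting the identity $I - D_\alpha G = -\alpha\delta^2 D_\alpha G\Delta$ and using commutativity of $D_\alpha$, $G$, $\Delta$ to regroup the factors,
\[
  e_J = (-\alpha\delta^2 D_\alpha G\Delta)^{J+1}u = (-\alpha\delta^2)^{J+1}(D_\alpha G)^{J+1}\Delta^{J+1}u,
\]
which is \eqref{eqn:continuouserrorequation}. The bound \eqref{eqn:continuouserrorbound} then follows at once from submultiplicativity of the operator norm together with $\|D_\alpha G\| \leq 1$ from \eqref{eqnDalphaBound}, since that gives $\|(D_\alpha G)^{J+1}\| \leq 1$, leaving $\|e_J\| \leq (\alpha\delta^2)^{J+1}\|\Delta^{J+1}u\|$.

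The step requiring the most care — more a bookkeeping point than a genuine obstacle — is justifying the operator manipulations: one needs $\Delta^{J+1}u$ to be well defined (a regularity hypothesis on $u$, implicit in the statement through the term $\|\Delta^{J+1}u\|$), and one should remark explicitly that $G$, $D_\alpha$, $\Delta$ commute so that the reordering $(D_\alpha G\Delta)^{J+1} = (D_\alpha G)^{J+1}\Delta^{J+1}$ is legitimate. The same identities could instead be extracted from the variational formulation \eqref{alg:varmitlar} by choosing test functions suitably, but the spectral/operator route above is shorter and keeps the powers of $D_\alpha G$ transparent for the norm estimate.
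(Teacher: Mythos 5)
Your proposal is correct and follows essentially the same route as the paper: the same error recursion $e_j = (I-D_\alpha G)e_{j-1} = \alpha D_\alpha(I-G)e_{j-1}$ with $e_0 = (I-D_\alpha G)u$, the same identity $G^{-1}-I = -\delta^2\Delta$ to convert the accumulated factor into $(-\alpha\delta^2)^{J+1}(D_\alpha G)^{J+1}\Delta^{J+1}$, and the same bound via $\|D_\alpha G\|\le 1$. The only difference is cosmetic (you substitute the Laplacian identity at the operator level and state the commutativity of $G$, $D_\alpha$, $\Delta$ explicitly, which the paper uses implicitly when grouping the powers).
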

\begin{proof}
  For $0 < j \leq J$, we start with (\ref{eqn:mitlar:2}) and an identity for the original solution $u$,
\begin{align*}
  [(1-\alpha)G+\alpha I](u_j - u_{j-1}) & = \bar u-Gu_{j-1} \quad \text{and}\\
  [(1-\alpha)G+\alpha I](u - u) & = \bar u - Gu.
\end{align*}
Subtracting these equations and rearranging gives
\begin{equation} \label{eqn:errorrecurrence}
  e_j = \alpha D_\alpha (I - G) e_{j-1} = \alpha D_\alpha G (G^{-1} - I)
e_{j-1}.
\end{equation}
For $j = 0$, we use (\ref{eqn:mitlar:1}) and the true solution 
\begin{align*}
  [(1-\alpha)G+\alpha I]u_0&=\bar u \quad \text{and}\\
  [(1-\alpha)G+\alpha I]u &=(1-\alpha)\bar{u} + \alpha u.
\end{align*}
Subtraction gives
\begin{equation} \label{eqn:errorinitial}
  e_0 = \alpha D_\alpha (I - G) u = \alpha D_\alpha G (G^{-1} - I) u.
\end{equation}
Thus, we arrive at,
\begin{equation*}
  e_J = \alpha^{J + 1}(D_{\alpha}G)^{J + 1}(G^{-1} - I)^{J + 1}u
\end{equation*}
Using the fact that $(G^{-1} - I) u = -\delta^2\Delta u$, we have shown equation (\ref{eqn:continuouserrorequation}).  The norm of the error is bounded by taking the norm of the error equation (\ref{eqn:continuouserrorequation}) and using the bound on $\|D_\alpha G\|$ in (\ref{eqnDalphaBound}) to obtain
\begin{align*}
  \|e_J\| 
    &= \|(-\alpha \delta^2)^{J+1} (D_\alpha G)^{J+1}(\Delta^{J+1}u)\|\\
    &\leq (\alpha \delta^2)^{J+1} \|(D_\alpha G)^{J+1}\|
\|\Delta^{J+1}u\|\\
    &\leq (\alpha \delta^2)^{J+1}\|\Delta^{J+1}u\|.
\end{align*}
\end{proof}
\begin{proposition} \label{prop:Dalphaj}
The $j^{th}$ step of the Mitlar algorithm, $u_j$, is given by
\begin{equation}
 u_j:= D_{\alpha,j} \overline{u} 
     =D_\alpha \sum_{i=0}^{j}(\alpha D_{\alpha}(I-G))^i \overline{u}.
\end{equation}
\end{proposition}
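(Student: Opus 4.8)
The plan is to derive a one-step linear recurrence for $u_j$ from the defining equations of Algorithm~\ref{alg:mitlar} and then unroll it by induction on $j$.

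\emph{Base case.} Equation~\eqref{eqn:mitlar:1} reads $[(1-\alpha)G+\alpha I]u_0=\bar u$, so by the definition of $D_\alpha$ we get $u_0=D_\alpha\bar u$, which is exactly the $j=0$ instance of the claimed identity, the $i=0$ summand being the identity operator.

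\emph{The recurrence.} For $1\le j\le J$, equation~\eqref{eqn:mitlar:2} gives $u_j-u_{j-1}=D_\alpha(\bar u-Gu_{j-1})$, hence $u_j=D_\alpha\bar u+(I-D_\alpha G)u_{j-1}$. Next I would record the algebraic identity $I-D_\alpha G=D_\alpha\big([(1-\alpha)G+\alpha I]-G\big)=\alpha D_\alpha(I-G)$, which is the same computation already used to obtain~\eqref{eqn:errorrecurrence} in the proof of Proposition~\ref{prop:continuouserror}. This converts the recurrence to $u_j=D_\alpha\bar u+\alpha D_\alpha(I-G)\,u_{j-1}$.

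\emph{Induction.} Assuming $u_{j-1}=D_\alpha\sum_{i=0}^{j-1}\big(\alpha D_\alpha(I-G)\big)^i\bar u$, I would substitute this into the recurrence. The single point requiring care is that the leading factor $\alpha D_\alpha(I-G)$ must be moved past the outer $D_\alpha$; this is legitimate because $D_\alpha=[(1-\alpha)G+\alpha I]^{-1}$ is a function of $G$ and therefore commutes with $(1-\alpha)G+\alpha I$, hence with $G$ and with $I-G$. After commuting, the summation index shifts from $i$ to $i+1$, and combining with the stand-alone term $D_\alpha\bar u$ reassembles $D_\alpha\sum_{i=0}^{j}\big(\alpha D_\alpha(I-G)\big)^i\bar u$, closing the induction. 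The only real obstacle here is bookkeeping---making the commutativity step explicit and reindexing the sum correctly---since all needed boundedness of $D_\alpha$, $D_\alpha G$ and $I-D_\alpha G$ is already supplied by~\eqref{eqnDalphaBound} and no further analytic input is required.
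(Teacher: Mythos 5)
Your proposal is correct and follows essentially the same route as the paper: derive $u_j = D_\alpha\bar u + (I-D_\alpha G)u_{j-1}$ from \eqref{eqn:mitlar:2}, rewrite $I-D_\alpha G=\alpha D_\alpha(I-G)$, and unroll the recurrence. The only difference is presentational---you make explicit the induction and the fact that $D_\alpha$ commutes with $I-G$ (needed to reindex the sum), a step the paper's proof leaves implicit behind its vertical dots.
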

\begin{proof}
  Starting with (\ref{eqn:mitlar:2}), solve for $u_j$ with the equations 
\begin{equation*}
  I - D_\alpha G = \alpha D_\alpha (I - G)  \text{ and }  u_0 = D_\alpha
\overline{u}.
\end{equation*}
\begin{align*}
  u_j &= u_{j-1} + D_\alpha(\overline{u} - G u_{j-1})\\
    &= D_\alpha \overline{u} + (I - D_\alpha G) u_{j-1}\\
    &= D_\alpha \overline{u} + \alpha D_\alpha (I - G) u_{j-1}\\
    &\vdots \\
    &= D_\alpha \sum_{i=0}^{j}(\alpha D_{\alpha}(I-G))^i \overline{u}.
\end{align*}
as claimed.
\end{proof}
Noise amplification is one of the fundamental difficulties in solving ill-posed inverse problems \cite{EHN96}.  The noise amplification is studied in Problem \ref{prob:noisyDiff} where $\bar u$ has additive noise $\epsilon$.  The Mitlar algorithm applied to this problem gives an improvement over iterated Tikhonov regularization in the noise free portion of the error as shown in Proposition \ref{prop:continuouserror}.  The bound on the error in the noisy data is no worse as shown in Theorem \ref{thmContErrorNoise}.
\begin{theorem}\label{thmContErrorNoise}
  Under the conditions of Algorithm \ref{alg:mitlar} and (\ref{eqn:differentialfilter}) and
if there exists some $\epsilon_0$ such that $\|\epsilon\| <
\epsilon_0$, then the error in the $j^{th}$ step of the Mitlar algorithm is
\begin{equation} \label{eqn:errorequation}
  e_J = (-\alpha \delta^2)^{J+1} (D_\alpha G)^{J+1}(\Delta^{J+1}u) 
      + D_\alpha \sum_{j=0}^{J}(I - D_\alpha G)^j \epsilon.
\end{equation}
The error is bounded,
\begin{equation} \label{eqn:errorbound}
  \| e_J \| \leq (\alpha \delta^2)^{J+1} \| \Delta^{J+1} u\| +
\frac{(J+1)\epsilon_0}{\alpha}.
\end{equation}
\end{theorem}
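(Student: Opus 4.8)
The plan is to split the error into a \emph{noise-free} contribution, which is exactly what Proposition \ref{prop:continuouserror} controls, plus a \emph{noise-propagation} contribution, and then to bound each piece separately using only the elementary operator estimates in \eqref{eqnDalphaBound}. The structural fact that makes this work is that the Mitlar map $\bar u \mapsto u_J = D_{\alpha,J}\bar u$ is linear; this is visible from the closed form in Proposition \ref{prop:Dalphaj}, $D_{\alpha,J}\bar u = D_\alpha\sum_{i=0}^{J}(\alpha D_\alpha(I-G))^i\bar u$, together with the identity $I - D_\alpha G = \alpha D_\alpha(I-G)$ proved there. So by linearity, running the algorithm on the observed signal (which is related to $u$ through $Gu = \bar u + \epsilon$ as in Problem \ref{prob:noisyDiff}) gives $u_J$ as the output on the noise-free part plus $D_\alpha\sum_{j=0}^{J}(I-D_\alpha G)^j\epsilon$; subtracting from $u$ and invoking Proposition \ref{prop:continuouserror} for the noise-free part yields the error identity \eqref{eqn:errorequation}.

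The route I would actually write out is to repeat the computation in the proof of Proposition \ref{prop:continuouserror} while keeping the noise. Subtracting the identity for $u$ from equation \eqref{eqn:mitlar:2} now produces the inhomogeneous recurrence $e_j = (I - D_\alpha G)e_{j-1} + D_\alpha\epsilon$ for $j\ge 1$, and from \eqref{eqn:mitlar:1} one gets $e_0 = \alpha D_\alpha(I-G)u + D_\alpha\epsilon = (I-D_\alpha G)u + D_\alpha\epsilon$. Because every operator appearing here is a function of the self-adjoint $G$ and hence commutes, solving this recurrence gives $e_J = (I-D_\alpha G)^{J+1}u + D_\alpha\sum_{j=0}^{J}(I-D_\alpha G)^j\epsilon$, and rewriting the first summand via $I - D_\alpha G = \alpha D_\alpha G(G^{-1}-I)$ and $(G^{-1}-I)u = -\delta^2\Delta u$, exactly as in Proposition \ref{prop:continuouserror}, turns it into $(-\alpha\delta^2)^{J+1}(D_\alpha G)^{J+1}\Delta^{J+1}u$, giving \eqref{eqn:errorequation}.

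For the bound \eqref{eqn:errorbound} I would apply the triangle inequality to \eqref{eqn:errorequation}. The first term is bounded by $(\alpha\delta^2)^{J+1}\|\Delta^{J+1}u\|$ verbatim from Proposition \ref{prop:continuouserror}, using $\|D_\alpha G\|\le 1$. For the noise term, pull $D_\alpha$ out and estimate the finite sum term by term: $\|D_\alpha\sum_{j=0}^{J}(I-D_\alpha G)^j\epsilon\| \le \|D_\alpha\|\sum_{j=0}^{J}\|I-D_\alpha G\|^j\|\epsilon\|$, and then insert $\|D_\alpha\|\le 1/\alpha$, $\|I-D_\alpha G\|\le 1$ from \eqref{eqnDalphaBound}, and $\|\epsilon\|<\epsilon_0$ to obtain $(J+1)\epsilon_0/\alpha$. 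Adding the two estimates gives \eqref{eqn:errorbound}.

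I expect the only point requiring care---rather than a genuine obstacle---to be the bookkeeping around how $\epsilon$ enters the two recurrences and propagates: one must check that the noise-free part of $e_J$ is precisely the quantity controlled by Proposition \ref{prop:continuouserror}, so that no hidden $\epsilon$-dependence leaks into the first term, and that the finite sum $\sum_{j=0}^J(I-D_\alpha G)^j$ is handled with the crude per-term bound $\|I-D_\alpha G\|\le 1$ rather than a Neumann-series estimate, which is unavailable since the spectrum of $I-D_\alpha G$ reaches up to (but does not include) $1$---this is exactly why the factor $J+1$, and not a bounded constant, multiplies $\epsilon_0/\alpha$. The Sobolev regularity needed to make sense of $\Delta^{J+1}u$ and of applying $G^{-1}-I$ repeatedly is inherited from Proposition \ref{prop:continuouserror} and need not be revisited.
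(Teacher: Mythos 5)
Your proposal is correct and follows essentially the same route as the paper: split $e_J$ into the noise-free error controlled by Proposition \ref{prop:continuouserror} plus the propagated noise $D_\alpha\sum_{j=0}^{J}(I-D_\alpha G)^j\epsilon$ coming from Proposition \ref{prop:Dalphaj} (your inhomogeneous recurrence $e_j=(I-D_\alpha G)e_{j-1}+D_\alpha\epsilon$ simply re-derives that closed form inline), and then bound both pieces with \eqref{eqnDalphaBound} to get $(\alpha\delta^2)^{J+1}\|\Delta^{J+1}u\|+(J+1)\epsilon_0/\alpha$. The only blemish is a sign slip in your opening sketch, where under the paper's convention $\bar u = Gu-\epsilon$ the noise enters $u_J$ with a minus sign (hence $e_J$ with a plus sign); your detailed recurrence gets this right and the final bound is unaffected.
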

\begin{proof}
Using the definition the $J^{th}$ modified iterative Tikhonov-Lavrentiev operator $D_{\alpha, J}$ and $Gu = \bar{u}  + \epsilon$, we have
\begin{align*}
  u_J &= D_{\alpha,J} \overline{u}\\
        &= D_{\alpha,J} (Gu - \epsilon) \\
        &= D_{\alpha,J}Gu - D_{\alpha,J}\epsilon
\end{align*}
We then divide the error, $e_J = u - u_J$, into two parts,
\begin{equation*}
  e_J = (u - D_{\alpha,J}Gu) + D_{\alpha,J}\epsilon
\end{equation*}
Together with Proposition \ref{prop:continuouserror} and \ref{prop:Dalphaj}, we have
\begin{align*}
  e_J = (-\alpha\delta^2)^{J + 1}(D_{\alpha}G)^{J + 1}(\Delta^{J + 1}u) + D_{\alpha}\sum_{j = 0}^J(I - D_{\alpha}G)^j \epsilon
\end{align*}
as claimed.  To get a bound on the norm of the error, start with the error equation and take the norm and use the inequalities in (\ref{eqnDalphaBound}).
\begin{align*}
  \| e_J \| &= \left\| (-\alpha \delta^2)^{J+1} (D_\alpha G)^{J+1}(\Delta^{J+1}u) 
      + D_\alpha \sum_{j=0}^{J}(I - D_\alpha G)^j \epsilon \right\|\\
    & \leq (\alpha \delta^2)^{J+1}\|\Delta^{J+1}u\| 
      + \|D_\alpha\| \sum_{j=0}^{J}\|(I - D_\alpha G)^j\| \|
\epsilon \|\\
    & \leq (\alpha \delta^2)^{J+1}\|\Delta^{J+1}u\| 
      + \frac{1}{\alpha}\sum_{j=0}^{J}  \epsilon_0\\
    & \leq (\alpha \delta^2)^{J+1} \| \Delta^{J+1} u\| +
\frac{(J+1)\epsilon_0}{\alpha}.
\end{align*}
\end{proof}
We see that this is an improvement over iterative Tikhonov-Lavrentiev regularization because of its double asymptotic behavior in $\alpha$ and $\delta$ of the error bound.  Each update step in the method contributes an extra factor of $\alpha \delta^2$ , whereas each update step of iterative Tikhonov-Lavrentiev contributes only an extra factor of $\alpha$.  However, we also notice that each iterative step adds an term, $\frac{\epsilon_0}{\alpha}$, to the error bound, thus possibly increasing the error.  How to balance the total number of iterations and the noise becomes significant in reducing the error bound.  And we will discuss such relationship in section \ref{sec:descent}.
\subsection{Discrete Mitlar Applied to the differential operator}
The results of the previous section are now extended to the discrete form of the Mitlar algorithm.  The modified iterated Tikhonov-Lavrentiev regularization operator applied to the differential filter is defined in Definition \ref{defn:discdiffFilterVar} variationally on a finite dimensional space. 
\begin{algorithm} [Discrete modified iterated Tikhonov-Lavrentiev regularization]
\label{algo:varmitlardisc}
   Let $X^h$ be a finite dimensional subspace of $X$.  Let $u \in X$ and $G^h u = \overline{u}^h \in X^h$ satisfy Definition \ref{defn:discdiffFilterVar}.  Choose $\alpha>0$ and filter radius $\delta>0$ and define $u_j^h = D_{\alpha,j}^h \overline{u}$ recursively by finding the unique solution in $X^h$ to the problems
\begin{align} \label{alg:varmitlardisc}
\alpha \delta^2 \var{\nabla u_0^h,\nabla v^h} + \var{u_0^h,v^h} 
  &=  \var{u,v^h},\quad \forall v^h \in X^h \text{ and} \\
\alpha \delta^2 \var{\nabla u_j^h,\nabla v^h} + \var{u_j^h,v^h} 
  &=  \var{u,v^h} + \alpha \delta^2 \var{\nabla u_{j-1}^h,\nabla v^h} \quad \forall v^h \in X^h. \nonumber
\end{align}
\end{algorithm}
\begin{theorem}\label{thm:discerror}
Given a filter radius $\delta > 0$ of the differential filter operator $G$, and fix a regularization parameter $0 \leq \alpha \leq 1$ and stopping number $J\geq 0$.  
If $\| \Delta^{j} u \|_{L^2(\Omega)}$ is bounded for all $j \leq J+1$, then the error to the problem in (\ref{EQ:TRUEPB}) using the discrete Mitlar algorithm is bounded.  In particular,
\begin{align}
  \| u - D_{\alpha, J}^h G^h u \|_{L^2(\Omega)} &\leq (\alpha \delta^2)^{J+1} \| \Delta^{J+1} u\|_{L^2(\Omega)} \nonumber \\
      & \quad + C (\sqrt{\alpha} \delta h^k + h^{k+1}) \max_{0\leq j \leq J} \| D_{\alpha,j} G u \|_{H^{k+1}(\Omega)}.
\end{align}
\end{theorem}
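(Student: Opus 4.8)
The plan is to bound the error by the triangle inequality, separating it into the continuous regularization error (already controlled by Proposition~\ref{prop:continuouserror}) and a finite element discretization error that must be propagated through the iterations of the discrete scheme.

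First I would set $u_J := D_{\alpha,J}Gu$, the continuous Mitlar iterate, which by the variational differential filter \eqref{eqnVarDiffFilter} solves the continuous recursion \eqref{alg:varmitlar} with $\bar u = Gu$; likewise $u_J^h := D_{\alpha,J}^h G^h u$ is exactly the solution of \eqref{alg:varmitlardisc}, since $\bar u^h = G^h u$ satisfies $\delta^2(\nabla\bar u^h,\nabla v^h)+(\bar u^h,v^h)=(u,v^h)$. Then
$$\|u - u_J^h\|_{L^2(\Omega)} \;\le\; \|u - u_J\|_{L^2(\Omega)} + \|u_J - u_J^h\|_{L^2(\Omega)},$$
and Proposition~\ref{prop:continuouserror}, applied in the noise-free setting of Problem~\ref{prob:noisefreeDiff}, bounds the first term by $(\alpha\delta^2)^{J+1}\|\Delta^{J+1}u\|_{L^2(\Omega)}$, which is the first term of the claimed estimate. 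It remains to estimate $\phi_J := u_J - u_J^h$.

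To do this I would work with the weighted bilinear form $a(v,w):=\alpha\delta^2(\nabla v,\nabla w)+(v,w)$ on $X$ and its energy norm $\|v\|_a := a(v,v)^{1/2}$, for which $\|v\|\le\|v\|_a$, $\sqrt{\alpha}\,\delta\|\nabla v\|\le\|v\|_a$, and $a(v,w)\le\|v\|_a\|w\|_a$. Subtracting \eqref{alg:varmitlardisc} from \eqref{alg:varmitlar} restricted to test functions $v^h\in X^h$ gives the Galerkin-type identities
$$a(\phi_0,v^h)=0, \qquad a(\phi_j,v^h)=\alpha\delta^2(\nabla\phi_{j-1},\nabla v^h) \quad (1\le j\le J),$$
valid for all $v^h\in X^h$. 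For each $j$ I would choose a standard interpolant $w_j^h\in X^h$ of $u_j$ (for which the estimates \eqref{eqn:approxInequality} hold with $u=u_j$), write $\phi_j=\eta_j+\xi_j$ with $\eta_j:=u_j-w_j^h$ and $\xi_j:=w_j^h-u_j^h\in X^h$, and test with $v^h=\xi_j$: using $a(\xi_j,\xi_j)=a(\phi_j,\xi_j)-a(\eta_j,\xi_j)$ together with the bounds $\alpha\delta^2(\nabla\phi_{j-1},\nabla\xi_j)\le\|\phi_{j-1}\|_a\|\xi_j\|_a$ and $a(\eta_j,\xi_j)\le\|\eta_j\|_a\|\xi_j\|_a$ yields $\|\xi_j\|_a\le\|\phi_{j-1}\|_a+\|\eta_j\|_a$, hence $\|\phi_j\|_a\le 2\|\eta_j\|_a+\|\phi_{j-1}\|_a$ and $\|\phi_0\|_a\le\|\eta_0\|_a$. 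Unrolling the recursion gives $\|\phi_J\|_a \le 2\sum_{j=0}^J\|\eta_j\|_a$. Finally, \eqref{eqn:approxInequality} (with $\|u_j\|_{H^k}\le\|u_j\|_{H^{k+1}}$) gives $\|\eta_j\|\le Ch^{k+1}\|u_j\|_{H^{k+1}(\Omega)}$ and $\|\nabla\eta_j\|\le Ch^k\|u_j\|_{H^{k+1}(\Omega)}$, so $\|\eta_j\|_a\le C(\sqrt{\alpha}\,\delta h^k+h^{k+1})\|u_j\|_{H^{k+1}(\Omega)}$; summing over $j$, using $u_j=D_{\alpha,j}Gu$ and $\|u_j\|_{H^{k+1}}\le\max_{0\le j\le J}\|D_{\alpha,j}Gu\|_{H^{k+1}(\Omega)}$, and absorbing $(J+1)$ into $C$, gives the second term of the claimed bound; combining with the split above finishes the proof.

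The main obstacle is that this is not a standard one-shot C\'ea estimate: the continuous and discrete iterations carry different right-hand sides (the correction term involves $\nabla u_{j-1}$ continuously but $\nabla u_{j-1}^h$ discretely), so the discretization error is coupled across iterations. The key device for closing the recursion without a loss of powers of $\alpha$ or $\delta$ is the energy norm $\|\cdot\|_a$: the otherwise troublesome term $\alpha\delta^2(\nabla\phi_{j-1},\nabla\xi_j)$ is exactly of the form controlled by $\|\phi_{j-1}\|_a\|\xi_j\|_a$ via $\sqrt{\alpha}\,\delta\|\nabla(\cdot)\|\le\|\cdot\|_a$, which keeps the accumulation constant linear in $J$ rather than exponential.
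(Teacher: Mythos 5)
Your proposal is correct and follows essentially the same route as the paper's proof: the same triangle-inequality split into the continuous regularization error (bounded by Proposition \ref{prop:continuouserror}) plus a discretization error, the same subtraction of the continuous and discrete variational recursions, the same interpolant splitting, and the same use of the approximation inequalities \eqref{eqn:approxInequality}. The only real difference is technical: you close the recursion with Cauchy--Schwarz in the energy norm $\|\cdot\|_a$ rather than the paper's Young-inequality splitting, which as you note gives an accumulation constant linear in $J$ instead of the paper's geometrically growing $C(J)$ --- a minor sharpening, not a different method.
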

\begin{proof}
We denote $\|\cdot\|_{L^2(\Omega)}$ by $\| \cdot \|$.  Then we add and subtract the exact deconvolution term, and use the triangle inequality,
\begin{equation}\label{discerroreqn1}
  \| u - D_{\alpha, J}^h G^h u \| \leq \| u - D_{\alpha, J} G u \| + \| D_{\alpha, J} G u - D_{\alpha, J}^h G^h u \|.
\end{equation}
The first term of \eqref{discerroreqn1} is bounded by (\ref{eqn:continuouserrorbound})
\begin{equation*}
  \| u - D_{\alpha, J} G u \| \leq (\alpha \delta^2)^{J+1} \| \Delta^{J+1} u \|.
\end{equation*}
For the second term of \eqref{discerroreqn1}, start with (\ref{alg:varmitlar}) and take $v = v^h$, then subtract equation (\ref{alg:varmitlardisc}). For $j=1, \dots, J$, we have
\begin{equation}\label{eqn:varerror}
  \alpha \delta^2 \var{\nabla (u_j-u_j^h),\nabla v^h} + \var{u_j-u_j^h,v^h} 
  =   \alpha \delta^2 \var{\nabla (u_{j-1} - u_{j-1}^h) ,\nabla v^h}.
\end{equation}
The case when $j=0$ follows similarly or see \cite{MS09}.
We define $\eta_j = u_j - w_j^h$ and $\phi^h_j = u^h_j - w_j^h$ for some $w_j^h \in X^h$ to be chosen later for each $j=1,\dots,J$.  Using these definitions, we write (\ref{eqn:varerror}) as
\begin{equation}
   \alpha \delta^2 \var{\nabla (\eta_j-\phi_j^h),\nabla v^h} + \var{\eta_j-\phi_j^h,v^h} 
  =   \alpha \delta^2 \var{\nabla (\eta_{j-1} - \phi_{j-1}^h) ,\nabla v^h}.
\end{equation}
Take $v^h = \phi_j^h$, denote $e_j = u_j - u_j^h = \eta_j - \phi_j^h$, and separate the terms to get
\begin{align*}
  \alpha \delta^2 \| \nabla \phi_j^h \|^2 + \| \phi_j^h \|^2 &= 
      \alpha \delta^2 \var{\nabla \eta_j, \nabla \phi_j^h} + \var{\eta_j,\phi_j^h} - \alpha \delta^2 \var{\nabla e_{j-1}, \nabla \phi_j^h}\\
  &\leq \alpha \delta^2 \| \nabla \eta_j \|^2 + \frac{\alpha\delta^2}{4} \| \nabla \phi_j^h \|^2 
      + \frac{1}{2} \| \eta_j \|^2 + \frac{1}{2} \|\phi_j^h\|^2\\
      &\qquad + \alpha \delta^2 \| \nabla e_{j-1} \|^2 + \frac{\alpha \delta^2}{4} \|\nabla \phi_j^h\|^2.
\end{align*}
Move the $\|\nabla \phi_j^h\|^2$ and $\|\phi_j^h\|^2$ terms from the left hand side to the right, and then multiply by $2$ to get
\begin{align*}
  \alpha \delta^2 \| \nabla \phi_j^h \|^2 +  \| \phi_j^h \|^2 
      &\leq 2 \alpha \delta^2 \| \nabla \eta_j \|^2  
      +  \| \eta_j \|^2 
      + 2\alpha \delta^2 \| \nabla e_{j-1} \|^2.
\end{align*}
Use $\|e_j\| \leq \| \eta_j \| + \| \phi_j^h\|$ and $\| \nabla e_j\| \leq \|  \nabla \eta_j \| + \| \nabla \phi_j^h\|$ to obtain the recursion 
\begin{equation}
  \alpha \delta^2 \| \nabla e_j\|^2 + \|e_j\|^2 \leq 3 \alpha \delta^2 \| \nabla \eta_j \|^2  
      +  2 \| \eta_j \|^2 
      + 2 \alpha \delta^2 \| \nabla e_{j-1} \|^2.
\end{equation}
Thus
\begin{equation}
   \|e_J\| \leq C(J) \max_{0\leq j \leq J} \left( \sqrt{\alpha \delta^2} \| \nabla \eta_j \|  
      +   \| \eta_j \| \right)
\end{equation}
This inequality holds for all $w_j^h \in X^h$, so take the infimum over $X^h$ and apply the approximation inequalities (\ref{eqn:approxInequality}) to obtain
\begin{equation} \label{eqn:discerror}
\| D_{\alpha, J} G u - D_{\alpha, J}^h G^h u \| \leq   C (\sqrt{\alpha} \delta h^k + h^{k+1}) \max_{0\leq j \leq J} \| D_{\alpha,j} G u \|_{H^{k+1}(\Omega)}.
\end{equation}
Combining equations (\ref{eqn:continuouserrorbound}) and (\ref{eqn:discerror}) proves the claim.
\end{proof}
Problem \ref{prob:noisyDiff} still needs to be addressed.  If our data consists of discrete measurements that contain noise $\epsilon$, making $G^hu = \overline{u}^h + \varepsilon$, then approximations of the error from that noise are needed.  This problem is addressed by applying the discretized modified iterated Tikhonov-Lavrentiev algorithm to the discretized and noisy data $\overline{u}^h$.
First, we prove the boundedness of operators $G^h$, $D^h$, and $D^h_J$.
\begin{lemma} \label{lemDiscBounds}
The operators $G^h:X^h \rightarrow X^h$, $D^h:X^h \rightarrow X^h$, and $D^h_J:X^h \rightarrow X^h$ are bounded and furthermore they satisfy
\begin{align}
\|G^h\|     &\leq 1,\\
\|D^h\|     &\leq \frac{1}{\alpha}, \\
\|D^h_J\|   &\leq \frac{J+1}{\alpha}, \\
\|D^h G^h\| &\leq 1, \text{ and} \\
\|I - D^h G^h\| &\leq 1.
\end{align}
\end{lemma}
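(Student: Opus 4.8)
The plan is to carry the spectral argument used for the continuous operators $D_\alpha$, $D_\alpha G$ and $I-D_\alpha G$ over to the finite-dimensional setting, and then to control $D_J^h$ through the geometric-series representation coming from Proposition \ref{prop:Dalphaj}.

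First I would pin down the spectrum of $G^h$ viewed as an operator on the finite-dimensional inner product space $X^h$. By Lemma \ref{lemDiscFilterSelfAdjoint}, $G^h$ restricted to $X^h$ is self-adjoint and positive definite, hence it is diagonalizable with strictly positive eigenvalues. To obtain the upper bound $\|G^h\|\le 1$, take the test function $v^h=\overline{u}^h$ in Definition \ref{defn:discdiffFilterVar}; this yields
\begin{equation*}
  \delta^2\|\nabla\overline{u}^h\|^2+\|\overline{u}^h\|^2=(u,\overline{u}^h)\le\|u\|\,\|\overline{u}^h\|,
\end{equation*}
so $\|G^hu\|=\|\overline{u}^h\|\le\|u\|$. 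Hence the spectrum of $G^h$ on $X^h$ lies in $(0,1]$.

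Next, since $(1-\alpha)G^h+\alpha I$ is a polynomial in $G^h$ with the same eigenvectors, applying the scalar functions $f$ and $g$ of Lemma \ref{lem:DGfuncs} eigenvalue-by-eigenvalue shows that $D^h=[(1-\alpha)G^h+\alpha I]^{-1}$ has spectrum in $[1,\frac1\alpha)$, that $D^hG^h$ has spectrum in $(0,1]$, and that $I-D^hG^h$ has spectrum in $[0,1)$. All three operators are self-adjoint on $X^h$, so their operator norms equal their spectral radii, which gives $\|D^h\|\le\frac1\alpha$, $\|D^hG^h\|\le1$ and $\|I-D^hG^h\|\le1$: the discrete counterparts of \eqref{eqnDalphaBound}. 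For $D_J^h$ I would then invoke the discrete analogue of Proposition \ref{prop:Dalphaj},
\begin{equation*}
  D_J^h=D^h\sum_{i=0}^{J}\bigl(\alpha D^h(I-G^h)\bigr)^i=D^h\sum_{i=0}^{J}(I-D^hG^h)^i,
\end{equation*}
where the second equality uses the algebraic identity $I-D^hG^h=\alpha D^h(I-G^h)$, immediate from the definition of $D^h$. The triangle inequality then yields $\|D_J^h\|\le\|D^h\|\sum_{i=0}^{J}\|I-D^hG^h\|^i\le\frac{J+1}{\alpha}$.

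I do not expect a genuine obstacle here; the only point requiring a bit of care is that $G^h$, although originally introduced as a map $L^2(\Omega)^d\to X^h$, restricts to a self-adjoint operator on the subspace $X^h$ with spectrum contained in $(0,1]$, so that the scalar estimates of Lemma \ref{lem:DGfuncs} may legitimately be applied through the spectral theorem on $X^h$.
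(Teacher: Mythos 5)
Your proposal is correct and follows essentially the same route as the paper: bound $\|G^h\|$ by testing the discrete filter equation with $\overline{u}^h$ (Cauchy--Schwarz), use spectral/convex-combination arguments via Lemma \ref{lem:DGfuncs} and the self-adjointness from Lemma \ref{lemDiscFilterSelfAdjoint} for $D^h$, $D^hG^h$ and $I-D^hG^h$, and bound $D^h_J$ through the geometric-series representation $D^h_J=D^h\sum_{i=0}^{J}(\alpha D^h(I-G^h))^i$ with the triangle inequality. The only difference is that you make explicit two points the paper leaves implicit — the identity $\alpha D^h(I-G^h)=I-D^hG^h$ used to bound each summand, and the restriction of $G^h$ to $X^h$ so the spectral theorem applies — which is a welcome clarification but not a different argument.
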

\begin{proof}
For the first, take $u \in X^h$ then $\|G^h u\| \leq \|u^h\|$ by Cauchy-Schwartz and Young inequalities to equation (\ref{eqnVarDiffFilter}).  For the second, note that $D^h = [(1-\alpha)G^h + \alpha I]^{-1}$ is the convex combination of positive operators, so its spectrum is bounded by $\frac{1}{\alpha}$.  For the third, we write out $D^h_J = D^h \sum_{i=0}^J (\alpha D^h(I - G^h))^i$.  Then taking $u \in X^h$ we obtain,
\begin{align}
\|D^h_J u\| &= \|D^h \sum_{j=0}^J (\alpha D^h(I - G^h))^j u \| \\
          &\leq \|D^h\| \sum_{j=0}^J\| (\alpha D^h(I - G^h))^j\| \|u\| \\
          &\leq \frac{1}{\alpha} \sum_{j=0}^J \|u\| \\
          &\leq \frac{J+1}{\alpha} \|u\|.
\end{align}
The spectrum of $D^h G^h$ lies in between (0,1] and the spectrum of $I - D^h G^h$ lies in between [0,1) proving the result.
\end{proof}
\begin{theorem}\label{thm:Noisediscerror}
  If the noise $\varepsilon \in X^h$ is bounded $\| \varepsilon \| \leq \epsilon_0$, then the error $e_j$ between the desired solution, $u$,  and the discretized Mitlar solution applied to noisy data $\overline{u}^h$ with $G^hu =  \overline{u}^h+ \varepsilon $ is bounded, and
\begin{align}
\|e_j\| &:= \| u - D_J^h \overline{u}^h \| \nonumber\\
  &\leq \frac{J+1}{\alpha} \epsilon_0+ (\alpha \delta^2)^{J+1} \| \Delta^{J+1} u\| \nonumber\\
&\qquad + C (\sqrt{\alpha} \delta h^k + h^{k+1}) \max_{0\leq j \leq J} \| D_{\alpha,j} G u \|_{H^{k+1}(\Omega)}.
\end{align}
\end{theorem}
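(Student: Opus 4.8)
The plan is to decompose the error exactly as in the proof of Theorem~\ref{thmContErrorNoise}, separating the deterministic part (regularization error plus discretization error) from the part caused by noise amplification. Since the noisy discrete data satisfies $G^h u = \overline{u}^h + \varepsilon$, we have $\overline{u}^h = G^h u - \varepsilon$, and by linearity of the discrete operator $D_J^h$,
\begin{equation*}
  D_J^h \overline{u}^h = D_J^h G^h u - D_J^h \varepsilon .
\end{equation*}
Hence
\begin{equation*}
  e_J = u - D_J^h \overline{u}^h = \bigl( u - D_J^h G^h u \bigr) + D_J^h \varepsilon ,
\end{equation*}
and the triangle inequality gives $\|e_J\| \leq \| u - D_J^h G^h u \| + \| D_J^h \varepsilon \|$.

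Next I would bound the first term directly by Theorem~\ref{thm:discerror}, which is precisely the noise-free discrete error estimate, yielding
\begin{equation*}
  \| u - D_J^h G^h u \| \leq (\alpha \delta^2)^{J+1} \| \Delta^{J+1} u\| + C (\sqrt{\alpha} \delta h^k + h^{k+1}) \max_{0\leq j \leq J} \| D_{\alpha,j} G u \|_{H^{k+1}(\Omega)} .
\end{equation*}
For the second term I would invoke the operator bound $\|D_J^h\| \leq \frac{J+1}{\alpha}$ from Lemma~\ref{lemDiscBounds} together with $\|\varepsilon\| \leq \epsilon_0$ to conclude $\| D_J^h \varepsilon \| \leq \frac{J+1}{\alpha}\epsilon_0$. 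Adding the two bounds produces the stated estimate.

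There is no genuine obstacle here; the proof is a short assembly of results already established. The only points that require a little care are (i) checking that the exact splitting of $e_J$ is legitimate, i.e. that $D_J^h$ really acts linearly on $\overline{u}^h$, which is immediate from the finite series representation $D_J^h = D^h \sum_{i=0}^J (\alpha D^h(I - G^h))^i$ used in the proof of Lemma~\ref{lemDiscBounds}, and (ii) transcribing the constant $C$ and the maximum over $j$ in the discretization term consistently from Theorem~\ref{thm:discerror}. The resulting bound also makes the competing effects explicit: shrinking $\alpha$ reduces the $(\alpha\delta^2)^{J+1}$ regularization term but inflates the noise term $\frac{J+1}{\alpha}\epsilon_0$, which is exactly the tension resolved by the optimal stopping analysis in Section~\ref{sec:descent}.
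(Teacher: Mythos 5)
Your proposal is correct and follows essentially the same route as the paper's own proof: split $e_J = (u - D_J^h G^h u) + D_J^h\varepsilon$ via $\overline{u}^h = G^h u - \varepsilon$, bound the first term by Theorem~\ref{thm:discerror} and the second by $\|D_J^h\| \leq \frac{J+1}{\alpha}$ from Lemma~\ref{lemDiscBounds}, then add. No further comment is needed.
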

\begin{proof}
Use the triangle inequality to separate the error into two pieces, the true discretization error and the error associated with noise
  \begin{align}
     \| u - D_J^h \bar{u}^h \| &\leq \| u - D_J^h(G^hu - \varepsilon)\| \\
	                                       &\leq \| u - D_J^hG^hu \| + \| D_J^h \varepsilon \|.
  \end{align}
Using Theorem \ref{thm:discerror} to bound the first term,
\begin{align}
  \| u - D_J^h G^hu \| &\leq (\alpha \delta^2)^{J+1} \| \Delta^{J+1} u\| \nonumber \\
      & \quad + C (\sqrt{\alpha} \delta h^k + h^{k+1}) \max_{0\leq j \leq J} \| D_{\alpha,j} G u \|_{H^{k + 1}(\Omega)}.
\end{align}
Lemma \ref{lemDiscBounds} gives a bound on the second term,
\begin{equation}
\| D_J^h \varepsilon \| \leq \frac{J+1}{\alpha} \| \varepsilon \|.
\end{equation}
Combine these results to prove the claim.
\end{proof}
\section{Descent properties of modified iterated Tikhonov-Lavrentiev regularization}\label{sec:descent}
For a self-adjoint and positive definite $G$, solving Problem \ref{prob:noisefreeDiff} is equivalent to solving the following minimization problem
\begin{equation*}
w = \text{arg}\min_{v \in X} E_{0}(v), \text{ where } E_{0}(v)\coloneqq \frac{1}{2}(G v,v) - (\bar u,v).
\end{equation*}
We analyze when the Mitlar approximations, $\{u_0$, $u_1$, $\dots, u_J\}$, will form a minimizing sequence for $E_{0}$.
\begin{proposition}\label{5.1:prop1}
Let $G$ be self-adjoint and positive definite and $0 < \alpha \leq \frac{1}{2}$. Then the Modified Iterative Tikhonov-Lavrentiev iterates are a minimizing sequence for $E_{0}$. In particular, 
\begin{equation}\label{eqn:EnergyNorm}
E_{0}(u_j) - E_{0}(u_{j+1})=([(\frac{1}{2}-\alpha)G+\alpha I](u_{j+1}-u_j),u_{j+1}-u_j)\geq 0.
\end{equation}
with equality achieved only when $u_j = u_{j + 1}$.  Thus
\begin{equation*}
E_{0}(u_{j+1}) < E_{0}(u_j), \text{ unless } u_{j+1}=u_j.
\end{equation*}
\end{proposition}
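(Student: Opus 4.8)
The plan is to convert the Mitlar recursion directly into an energy identity. Set $d_j := u_{j+1}-u_j$ for the increment; re-indexing the update equation \eqref{eqn:mitlar:2} (replacing $j$ by $j+1$, so that the step producing $u_{j+1}$ from $u_j$ is used) shows that $d_j$ is characterized by
\begin{equation*}
  [(1-\alpha)G+\alpha I]d_j = \bar u - G u_j .
\end{equation*}
First I would expand $E_{0}(u_{j+1})=E_{0}(u_j+d_j)$, using self-adjointness of $G$ so that $\tfrac12\big(G(u_j+d_j),u_j+d_j\big)=\tfrac12(Gu_j,u_j)+(Gu_j,d_j)+\tfrac12(Gd_j,d_j)$ and the linear part is $(\bar u,u_j)+(\bar u,d_j)$; collecting terms gives
\begin{equation*}
  E_{0}(u_j)-E_{0}(u_{j+1}) = (\bar u - Gu_j,d_j) - \tfrac12(Gd_j,d_j).
\end{equation*}

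Next I would substitute the recursion for $\bar u - Gu_j$ into the inner product, obtaining
\begin{equation*}
  (\bar u - Gu_j,d_j) = \big([(1-\alpha)G+\alpha I]d_j,d_j\big) = (1-\alpha)(Gd_j,d_j)+\alpha\norm{d_j}^2 ,
\end{equation*}
so that combining the two displays and cancelling the $(1-\alpha)-\tfrac12$ part leaves
\begin{equation*}
  E_{0}(u_j)-E_{0}(u_{j+1}) = \big(\tfrac12-\alpha\big)(Gd_j,d_j)+\alpha\norm{d_j}^2 = \big([(\tfrac12-\alpha)G+\alpha I]d_j,d_j\big),
\end{equation*}
which is exactly \eqref{eqn:EnergyNorm}. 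The sign is then immediate from the hypotheses: $0<\alpha\le\tfrac12$ makes the coefficient $\tfrac12-\alpha$ nonnegative, so $(\tfrac12-\alpha)(Gd_j,d_j)\ge0$ since $G$ is positive semi-definite, while $\alpha\norm{d_j}^2\ge0$ with equality if and only if $d_j=0$ (here $\alpha>0$ is what gives strictness); hence $E_{0}(u_j)-E_{0}(u_{j+1})\ge0$, with equality only when $u_{j+1}=u_j$, and the decrease is strict otherwise.

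For the remaining assertion that the iterates form a minimizing sequence, I would note that positive definiteness of $G$ makes $E_{0}$ a strictly convex quadratic, hence bounded below with unique minimizer $w$ satisfying $Gw=\bar u$; the descent identity then shows $\{E_{0}(u_j)\}$ is monotone decreasing and bounded below, so it converges, and combining this with the noise-free convergence $u_j\to u=w$ available from Proposition~\ref{prop:continuouserror} (under the standing smoothness/size assumptions on $\Delta^{j}u$) forces $E_{0}(u_j)\to E_{0}(w)=\inf_{v\in X}E_{0}(v)$. The computation itself is routine; the only delicate points are keeping the factor $\tfrac12$ on the $(Gd_j,d_j)$ term correct in the expansion, correctly re-indexing \eqref{eqn:mitlar:2}, and recognizing that monotone strict decrease alone does not make $\{u_j\}$ a minimizing sequence in the literal sense — so the final step genuinely invokes the earlier convergence result (equivalently, the fact that the only fixed point of the iteration is $w$). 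I expect this last subtlety, rather than the algebra, to be the main thing worth spelling out.
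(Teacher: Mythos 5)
Your proof is correct and follows essentially the same route as the paper: expand the energy difference $E_{0}(u_j)-E_{0}(u_{j+1})$, substitute $\bar u - Gu_j = [(1-\alpha)G+\alpha I](u_{j+1}-u_j)$ from the update equation, and use self-adjointness of $G$ to obtain the identity, with nonnegativity and strictness coming from $0<\alpha\leq\frac{1}{2}$ and the $\alpha I$ term exactly as in the paper. Your final paragraph about the literal ``minimizing sequence'' claim goes beyond the paper's proof (which stops at the descent identity) and is a reasonable, correctly flagged addition rather than a divergence in method.
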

\begin{proof}
Expand using the definition of $E_{0}(\cdot)$ and cancel terms to prove the identity along with the fact that $G$ is self-adjoint.
\begin{align*}
E_{0}(u_j) - E_{0}(u_{j+1}) &= \frac{1}{2}(G u_{j},u_{j})-(\bar u,u_{j}) - \frac{1}{2}(G u_{j+1},u_{j+1})+(\bar u,u_{j+1})\\
&=\frac{1}{2}(G u_{j}, u_{j} - u_{j+1}) + \frac{1}{2}(G (u_{j} - u_{j+1}), u_{j}) \\
& \qquad - (\bar u,u_{j} - u_{j+1} )\\
&= \frac{1}{2}(G (u_{j} - u_{j+1}), u_{j} + u_{j+1}) \\
& \qquad - ( [ (1-\alpha)G + \alpha I ] (u_{j+1} - u_{j}) + G  u_{j}, u_{j} - u_{j+1})\\
&= \frac{1}{2}(G (u_{j} - u_{j+1}), u_{j} + u_{j+1}) - ( G (u_{j} - u_{j+1}),  u_{j+1} ) \\
& \qquad + \alpha ( [I - G] (u_{j} - u_{j+1}), u_{j} - u_{j+1})\\
&= ([(\frac{1}{2}-\alpha)G+\alpha I](u_{j+1}-u_j),u_{j+1}-u_j).
\end{align*}
Equation (\ref{eqn:EnergyNorm}) stays positive as long as $0 < \alpha \leq \frac{1}{2}$, hence $E_{0}(u_{j+1})<E_{0}(u_j)$ unless $u_{j+1}=u_j$ as claimed.  
\end{proof}
Equation (\ref{eqn:mitlar:2}) implies that if $u_{j} = u_{j+1}$, then $G u_{j} = \bar{u}$.  However, in the noisy case, such convergence is not desired, since $\bar{u} = Gu - \epsilon$ according to Problem \ref{prob:noisyDiff}.  Thus, as $j\rightarrow\infty$, $u_j \rightarrow Gu - \epsilon$.  This implies that it is critical to stop after a finite number of update steps.
We seek the desired solution, $u$, to Problem \ref{prob:noisyDiff} when noise is present in the filtering process.  Similarly, finding such $u$ is equivalent to the finding the minimizer of the following minimization problem,
\begin{equation*}
w = \text{arg}\min_{v \in X} E_{\epsilon}(v), \text{ where } E_{\epsilon}(v)\coloneqq \frac{1}{2}(G v,v) - (\bar u + \epsilon,v).
\end{equation*}
Then we analyze the sequence of noisy Mitlar approximations $u_j$'s in the noisy functional, $E_{\epsilon}(\cdot)$.  Again we expand the difference, $E_{\epsilon}(u_j)-E_{\epsilon}(u_{j+1})$, and the following is obtained,
\begin{equation*}
E_{\epsilon}(u_j)-E_{\epsilon}(u_{j+1}) = (\epsilon,u_{j+1}-u_j) + ([(\frac{1}{2}-\alpha)G+\alpha I](u_{j+1}-u_j),u_{j+1}-u_j)
\end{equation*}
\begin{theorem}\label{5.3:them1}
Let $G$ be self-adjoint and positive definite. Suppose
an estimate on the noise $\norm{\epsilon} \leq \epsilon_0$ is known.  Then the solutions from the Modified
Iterative Tikhonov-Lavrentiev Regularization are a minimizing sequence for the noisy functional $E_{\epsilon}$ as long as
\begin{equation} \label{alphabound}
\frac{\epsilon_0}{\norm{u_{j+1}-u_j}} \leq \alpha \leq \frac{1}{2}.
\end{equation}
\end{theorem}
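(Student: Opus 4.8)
The plan is to argue exactly as in the proof of Proposition \ref{5.1:prop1}, now carrying the extra noise term. Start from the identity displayed immediately before the theorem,
\[
E_{\epsilon}(u_j)-E_{\epsilon}(u_{j+1}) = (\epsilon,u_{j+1}-u_j) + ([(\tfrac{1}{2}-\alpha)G+\alpha I](u_{j+1}-u_j),u_{j+1}-u_j),
\]
which follows by expanding $E_{\epsilon}$, using that $G$ is self-adjoint, and substituting \eqref{eqn:mitlar:2}. Writing $w_j := u_{j+1}-u_j$, showing that the Mitlar iterates form a minimizing sequence for $E_{\epsilon}$ (in the sense of Proposition \ref{5.1:prop1}) reduces to showing that the right-hand side is nonnegative, and strictly positive unless $w_j=0$.

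First I would bound the linear term from below by the Cauchy--Schwarz inequality and the noise estimate: $(\epsilon,w_j) \ge -\|\epsilon\|\,\|w_j\| \ge -\epsilon_0\|w_j\|$. For the quadratic term, since $G$ is self-adjoint and positive definite and $\alpha\le\tfrac12$, the coefficient $\tfrac12-\alpha$ is nonnegative, so
\[
([(\tfrac{1}{2}-\alpha)G+\alpha I]w_j,w_j) = (\tfrac{1}{2}-\alpha)(Gw_j,w_j) + \alpha\|w_j\|^2 \ge \alpha\|w_j\|^2.
\]
Combining these two estimates yields
\[
E_{\epsilon}(u_j)-E_{\epsilon}(u_{j+1}) \ge \alpha\|w_j\|^2 - \epsilon_0\|w_j\| = \|w_j\|\bigl(\alpha\|w_j\|-\epsilon_0\bigr),
\]
and this is $\ge 0$ precisely when $\alpha\|w_j\|\ge\epsilon_0$, i.e. when $\epsilon_0/\|u_{j+1}-u_j\|\le\alpha$, which together with $\alpha\le\tfrac12$ is exactly condition \eqref{alphabound}. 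If $w_j=0$ both sides vanish and, by \eqref{eqn:mitlar:2}, $Gu_j=\bar u$, so the iteration has stagnated; otherwise the descent is strict. Since $E_{\epsilon}$ is a coercive quadratic functional ($G$ positive definite) it is bounded below, so the monotone nonincreasing sequence $\{E_{\epsilon}(u_j)\}$ converges, which gives the claim.

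There is no genuine analytic obstacle here; the substance is simply isolating the competition between the $O(\alpha\|w_j\|^2)$ descent coming from the regularized energy and the $O(\epsilon_0\|w_j\|)$ perturbation introduced by the noise. The point worth emphasizing is that the admissible lower bound for $\alpha$ in \eqref{alphabound} depends on the current increment $\|u_{j+1}-u_j\|$, so the condition must be monitored adaptively along the iteration: energy descent in $E_{\epsilon}$ persists only while the increments stay large relative to the noise level $\epsilon_0$, which is consistent with — and motivates — the need to terminate after finitely many update steps noted following Proposition \ref{5.1:prop1}.
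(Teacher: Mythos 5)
Your proof is correct and follows essentially the same route as the paper: the same identity for $E_{\epsilon}(u_j)-E_{\epsilon}(u_{j+1})$, Cauchy--Schwarz on the noise term, the lower bound $([(\tfrac12-\alpha)G+\alpha I]w_j,w_j)\ge\alpha\|w_j\|^2$ from $\alpha\le\tfrac12$ and positive definiteness, and condition \eqref{alphabound} to close the estimate, merely arranged as a lower bound on the energy difference rather than the paper's chain starting from $0$. One small aside: your final remark that $E_{\epsilon}$ is coercive and bounded below is not needed for the stated claim and is not automatic in infinite dimensions when the spectrum of $G$ accumulates at zero, but this does not affect the validity of the descent argument.
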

\begin{proof}
First, the Cauchy-Schwartz inequality implies
\begin{equation*}
| (\epsilon, u_{j+1} - u_{j}) | \leq \| \epsilon \| \| u_{j+1} - u_{j} \|
\end{equation*}
Then, if (\ref{alphabound}) holds, then
\begin{align*}
0 & \leq  \| \epsilon \| \| u_{j+1} - u_{j} \| - | (\epsilon, u_{j+1} - u_{j}) | \\
   &\le \epsilon_0\| u_{j+1} - u_{j} \| - | (\epsilon, u_{j+1} - u_{j}) | \\
  & \leq \alpha \| u_{j+1} - u_{j} \|^2 -  | (\epsilon, u_{j+1} - u_{j}) |\\
  & \leq \left( [ (\frac{1}{2} - \alpha) G + \alpha I] ( u_{j+1} - u_{j} ), u_{j+1} - u_{j} \right) - | (\epsilon, u_{j+1} - u_{j}) | \\
  & \leq \left( [ (\frac{1}{2} - \alpha) G + \alpha I] ( u_{j+1} - u_{j} ), u_{j+1} - u_{j} \right) + (\epsilon, u_{j+1} - u_{j}) \\
  & = E_{\epsilon}(u_j)-E_{\epsilon}(u_{j+1}).
\end{align*}
\end{proof}
Theorem \ref{5.3:them1} implies that when the size of the updates is larger than twice the noise, the updates move the solutions closer to desired solution, $u$. As the updates become smaller, $u_j$ begins to deviate from an
approximation of  $u$ unless $\alpha$ is increased.
This result can be extended if more is known about the noise or its statistical distribution.  In particular if there exists a projection operator $P$ where $P\epsilon \perp (u_{j+1} - u_{j}) $, then
\begin{equation*}
(\epsilon,u_{j+1}-u_j) = (\epsilon,(I-P)[u_{j+1}-u_j]).
\end{equation*}
In other words, if a component of the Mitlar update is in the range of the projection, then that updated component will reduce the error to the desired solution, $u$.  This suggests the following small algorithmic modification.
\begin{algorithm}
Given data $\bar u = Gu - \epsilon$, suppose $\| \epsilon \| \leq \epsilon_0$ and given a projection operator $P$ satisfying $\bar P\epsilon=0$.  Fix $J \geq 0$. Solve for $u_0$ in
\begin{equation*}
((1-\alpha)G+\alpha I)u_0=\bar u.
\end{equation*}
Then for $j=1, \dots, J$ and while $\frac{\epsilon_0}{\norm{u_j-u_{j-1}}} \leq \alpha \leq \frac{1}{2}$, solve for $u_j$ in
\begin{equation*}
((1-\alpha)G+\alpha I)(u_j-u_{j-1}) = \bar u - Gu_{j-1}
\end{equation*}
If $\alpha<\frac{\epsilon_0}{\norm{u_j-u_{j-1}}}$, then either increase $\alpha$ so that the hypothesis for Theorem \ref{5.3:them1} applies
 and recompute or compute as above $u_j-u_{j-1}$ and calculate 
\begin{equation}
\tilde{u}_j = u_{j-1}+ P(u_j-u_{j-1}).
\end{equation}
Then set $D_j \bar u :=\tilde{u}_j$.
\end{algorithm}
\section{Numerical illustrations}\label{sec:numerics}
We investigate several applications. In section \ref{sec:numericalstopping}, we verify the use of our stopping criterion.  In section \ref{numericalcomparison}, we compare the family of Tikhonov-Lavrentiev regularizations: Tikhonov-Lavrentiev, iterated Tikhonov-Lavrentiev, modified Tikhonov-Lavrentiev, and Mitlar in the application of deconvolving the differential filter.  Section \ref{numericalconvergence} verifies and compares the convergence rates of the $4$ different Tikhonov-Lavrentiev regularization methods.
\subsection{An example of the stopping criterion}
\label{sec:numericalstopping}
We implement and verify the optimal stopping criterion (Theorem \ref{5.3:them1})  in MATLAB (version: $R2014b$) with the following details: first we choose a true solution to be $u = \sin(\pi x) + \sin(200\pi x)$, plotted in Figure \ref{fig:stoppingtrue}, over the interval $[0, 2]$.
\begin{figure}
  \centering
    \includegraphics[width = 0.75\textwidth]{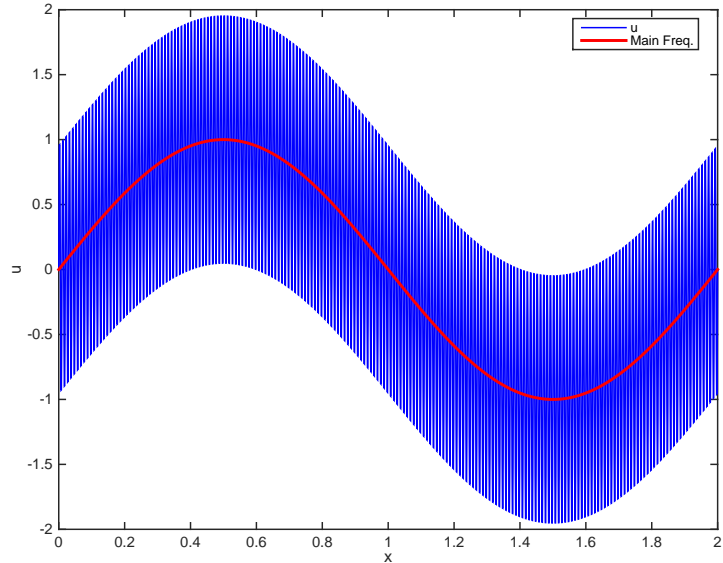}                   
    \caption{$u = \sin(\pi x) + \sin(200\pi x)$, with high and low frequencies, used in the demonstration for finding the optimal stopping condition}
    \label{fig:stoppingtrue}
\end{figure}
Then we discretize the interval with a step size of $ h = \frac{2}{1000}$ (hence $1001$ sample points) and choose the filtering radius for the differential filter to be $\delta = 6h$.  To implement the Helmholtz filter, we begin with approximating the Laplacian operator with a center differencing scheme,
\begin{equation*}
  \Delta u \approx \Delta^h u = \frac{u(x - \Delta x) - 2u(x) + u(x + \Delta x)}{(\Delta x)^2}.
\end{equation*}
and define our discrete operator, namely $A^h$ (the inverse operator to $G^h$) as
\begin{equation*}
  A^h \bar{u} = -\delta^2\Delta^h\bar{u} + \bar{u} \quad \text{and} \quad A^hG^h = I.
\end{equation*}
Our simulated data was obtained by filtering the true solution and adding $1\%$ random noise to the filtered data, that is, $\bar{u} = G^hu - \epsilon$ where $\| \epsilon \| = 0.01 * \| G^hu \|$ ($\epsilon$ is generated in Matlab using the command ``$\text{randn}$", and normalized to have $L^2$ norm of $1$).  And for calculating the $L^2$ norm of a function $f$ over $[a, b]$, we use either the composite Trapezoidal rule or the composite Simpson's rule.  We select the regularization parameter $\alpha = 0.1$ ($\alpha \le 0.5$ to have a decreasing sequence for the noise free energy functional) for Mitlar.  And we use the following guideline for finding the optimal stopping $J$:
\begin{enumerate}[Step 1.]
  \item At the $j^{th}$ iterate, except the initial iterate, we calculate $\norm{u_{j+1}-u_j}$.
  \item We then compare $\alpha$, the regularization parameter, to $\epsilon_0/\norm{u_{j+1}-u_j}$.
  \item According to \eqref{alphabound}: if $\alpha \ge \epsilon_0/\norm{u_{j+1}-u_j}$, we proceed to next iterate; when $\alpha < \epsilon_0/\norm{u_{j+1}-u_j}$, we stop the iteration.
\end{enumerate}
The actual simulation which we did for this demonstration, on the other hand, will not stop once we find the stopping $J$; instead the optimal $J$ will be recorded.

Figure \ref{fig:optimalJ} shows the noisy energy functional, $E_{\epsilon}$, calculated with the Mitlar approximation $u_j$'s when there is noise in the filtering process, i.e., $Gu = \bar{u} + \epsilon$. The calculated optimal stopping point (via Theorem \ref{5.3:them1}) occurs after $J = 4$ iteration steps and is shown as a green dot.  The figure shows that the algorithm stops right where functional reaches its minimum and starts increasing again, hence avoiding the convergence to the noisy solution.  However, because we do not have precise information of the noise (as it is always the case in real life applications), the algorithm will always try to stop before the energy functional reaches its minimum.
\begin{figure}
  \centering
  \includegraphics[width=0.75\textwidth]{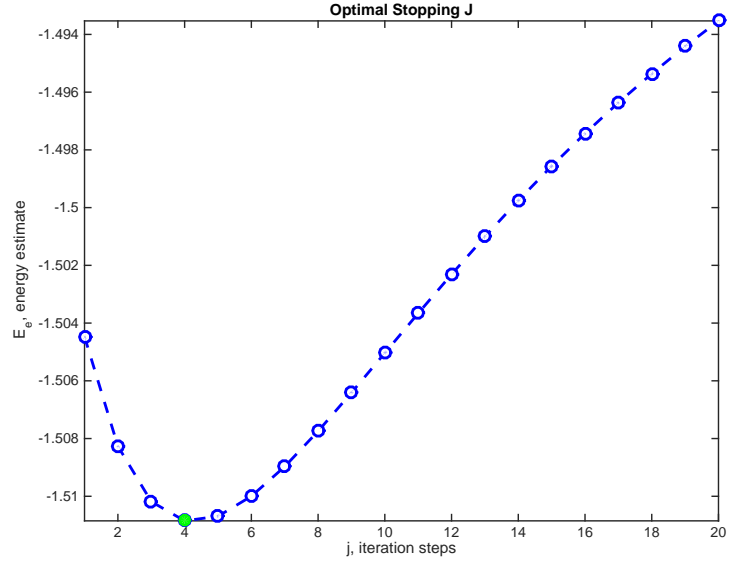}                  
  \caption{Noisy energy functional calculated for values of $J$ between $1$ and $20$.  The stopping criterion forces us to stop after $4$ iteration steps (as shown with the green dot with white face color).}
\label{fig:optimalJ}
\end{figure}
\subsection{Comparison of four deconvolution algorithms}\label{numericalcomparison}
We check the efficiency of Algorithm \ref{alg:mitlar} by comparing the relative error of a solution for a given parameter $\alpha$ to the relative errors found with Tikhonov-Lavrentiev, iterative Tikhonov-Lavrentiev, and Modified Tikhonov-Lavrentiev using the same $\alpha$.  We implement the codes in MATLAB (version $R2014b$) with the following details: we start out with the original data as $u = sin(\pi x) + 0.1sin(100\pi x)$, with $1001$ sample points taken over the interval $[0, 2]$, see figure \ref{fig:comparetrue}; hence the step size is $h = \frac{2}{1000}$.  We set our filtering radius at $\delta = 0.01$.  We let the $\alpha$ vary from $1$ to $10^{-3}$ and calculate $1$, $2$, and $3$ steps for the two iterative methods.   For an approximation $u_{appr.}$ to a desired solution $u$,  the relative error, $Err_{rel}$, is defined as, $Err_{rel} = \norm{u - u_{appr.}}/\norm{u}$.  The results are shown in Figures \ref{fig:mitlarerror1}, \ref{fig:mitlarerror2}, and \ref{fig:mitlarerror3} respectively.  We see that the error in Mitlar is the lowest for any given $\alpha$; meanwhile as the number of iterates increases, the accuracy of Mitlar increases.  We would like to point out that, for bigger $J$ (total iteration numbers), the difference between Mitlar and the other $3$ regularization methods widens for small $\alpha$.
\begin{figure}
    \includegraphics[width=0.75\textwidth]{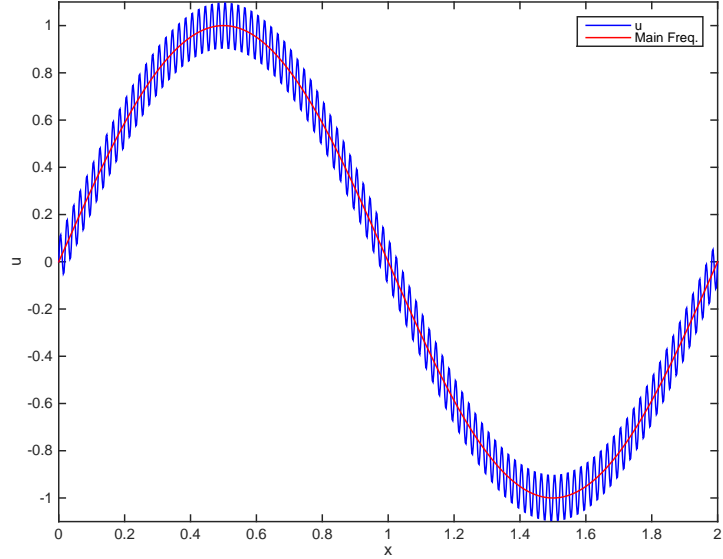}                  
    \caption{$u = sin(\pi x) + 0.1sin(100\pi x)$, use in performance comparison against Tikhonov-Lavrentiev, iterative Tikhonov-Lavrentiev, and Modified Tikhonov-Lavrentiev} 
    \label{fig:comparetrue} 
\end{figure}
\begin{figure}
    \includegraphics[width=0.75\textwidth]{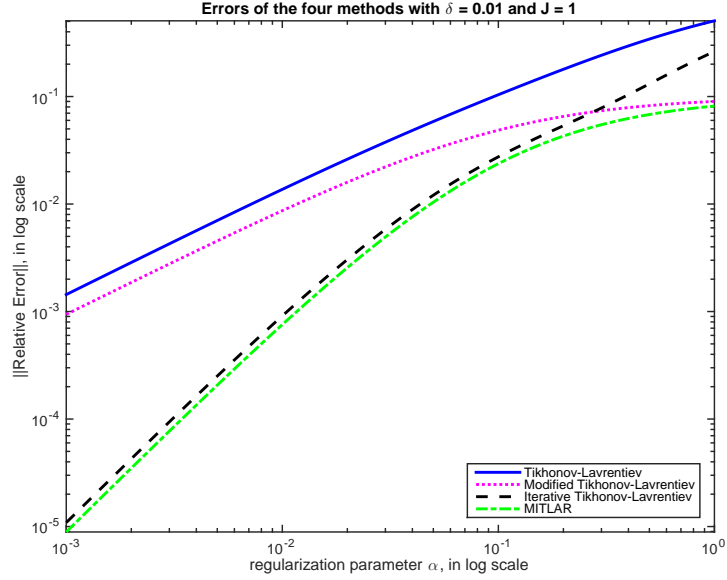}   
    \caption{Relative errors for the four algorithms over $\alpha = 10^{-3}$ to $\alpha = 1$ and $J = 1$.  Notice that the modified iterated Tikhonov-Lavrentiev plot has the lowest error over the entire range of regularization parameters. Notice that gap between Mitlar and iterative Tikhonov-Lavrentiev is small across the whole $\alpha$ range.}
    \label{fig:mitlarerror1}
\end{figure}
\begin{figure}
    \includegraphics[width=0.75\textwidth]{fig/comparison2}   
    \caption{Relative errors for the four algorithms over $\alpha = 10^{-3}$ to $\alpha = 1$ and $J = 2$.  Notice that the modified iterated Tikhonov-Lavrentiev plot has the lowest error over the entire range of regularization parameters. Notice that gap between Mitlar and iterative Tikhonov-Lavrentiev increases as $\alpha \rightarrow 0$.}
    \label{fig:mitlarerror2}
\end{figure}
\begin{figure}
    \includegraphics[width=0.75\textwidth]{fig/comparison3}   
    \caption{Relative errors for the four algorithms over $\alpha = 10^{-3}$ to $\alpha = 1$ and $J = 3$.  Notice that the modified iterated Tikhonov-Lavrentiev plot has the lowest error over the entire range of regularization parameters.  Notice that gap between Mitlar and iterative Tikhonov-Lavrentiev further increases as $\alpha \rightarrow 0$.}
    \label{fig:mitlarerror3}
\end{figure}
\subsection{Verification of convergence rates}\label{numericalconvergence}
We calculate the convergence rates of the family of Tikhonov-Lavrentiev regularization methods to verify the convergence rates predicted in Theorem \ref{thm:discerror}.  We take a true solution over the domain $[0, 2]\times [0, 2]$ of
\begin{equation*}
u = \sin(\pi x) \sin(\pi y) + sin(20\pi x)sin(20\pi y).
\end{equation*}
see Figure \ref{fig:convergetrue}.  
\begin{figure}
  \centering
  \includegraphics[width=0.7\textwidth]{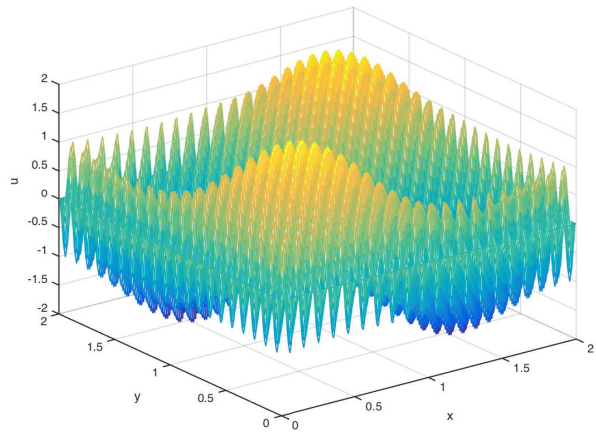}                  
  \caption{$u = \sin(\pi x) \sin(\pi y) + sin(20\pi x)sin(20\pi y)$, used in the convergence rate analysis.}
\label{fig:convergetrue}
\end{figure}
We discretize using the square command in FreeFEM++ \cite{MR3043640} with $n$ intervals in each of $x$ and $y$ coordinates and use piecewise continuous linear polynomials.  We use a filter radius of $\delta = 0.1(\frac{2\pi}{n})^{1/4} = \mathcal{O}(h^{0.25})$ and regularization parameter $\alpha = 0.1(\frac{2\pi}{n})^{1/2} = \mathcal{O}(h^{0.5})$.  And the results are presented in the following tables.
\begin{table}[t]
\center
\caption{Convergence rates for modified Tikhonov-Lavrentiev (Mitlar with $J = 0$).  The convergence rates are approximating the theoretical value of $1$ in $H^1$ norm, since $\mathcal{O}(\alpha\delta^2) = \mathcal{O}(h)$.}\label{table:MITLARconvrateJ=0}
\begin{tabular}{|l|l|l|l|l|}
\hline
n & $L_2$ error & rate & $H_1$ error & rate\\
\hline
60   & 9.31514e-05 &   & 45.2992 & \\
120 & 1.23099e-05 & 2.9198 & 30.0506 &   0.5921\\
240 & 8.6617e-07 & 3.8290  & 15.9409&   0.9147 \\
480 & 6.22738e-08 &  3.7980 & 8.43094 &   0.9190 \\
960 &  4.31597e-09 &  3.8509 & 4.38127 &   0.9443 \\
\hline
\end{tabular}
\end{table}
\begin{table}[!tb]
\center
\caption{Convergence rates for Mitlar with $J = 1$.  The convergence rates are approximating the theoretical value of $2$ in $H^1$ norm, since $\mathcal{O}((\alpha\delta^2)^2) = \mathcal{O}(h^2)$.}\label{table:MITLARconvrateJ=1}
\begin{tabular}{|l|l|l|l|l|}
\hline
n & $L_2$ error & rate & $H_1$ error & rate\\
\hline
60   & 7.56535e-05 &   & 28.7953 & \\
120 & 4.44553e-07& 7.4109 & 10.9148 & 1.3995 \\
240 & 4.7906e-07 & -0.1079 & 2.90149 &  1.9114 \\
480 & 4.91109e-08 & 3.2861 & 0.802423 &  1.8544 \\
960 & 3.87667e-09 & 3.6632  &  0.216175 & 1.8922\\
\hline
\end{tabular}
\end{table}
\begin{table}[!tb]
\center
\caption{Convergence rates for Tikhonov-Lavrentiev. The convergence rates are approximating the theoretical value of $0.5$ in $H^1$ norm, since $\mathcal{O}(\alpha) = \mathcal{O}(h^{0.5})$, superconvergence is obersved. }\label{table:ITLARconvrateJ=0}
\begin{tabular}{|l|l|l|l|l|}
\hline
n & $L_2$ error & rate & $H_1$ error & rate\\
\hline
60   & 8.96747e-05 &   & 45.6474 & \\
120 & 1.17545e-05 & 2.9315 & 30.8419 & 0.5656\\
240 & 8.31362e-07 & 3.8216 & 16.8802 & 0.8696 \\
480 & 6.03285e-08 & 3.7846 & 9.25259 & 0.8574 \\
960 & 4.21634e-09 & 3.8388 & 5.02571& 0.8805 \\
\hline
\end{tabular}
\end{table}
\begin{table}
\center
\caption{Convergence rates for iterated Tikhonov-Lavrentiev with $J = 1$. The convergence rates are approximating the theoretical value of $1$ in $H^1$ norm, since $\mathcal{O}(\alpha^2) = \mathcal{O}(h)$, superconvergence is observed.}\label{table:ITLARconvrateJ=1}
\begin{tabular}{|l|l|l|l|l|}
\hline
n & $L_2$ error & rate & $H_1$ error & rate\\
\hline
60   & 7.50352e-05 &   & 29.1851 & \\
120 & 7.58837e-09 & 13.2715 & 11.4673 & 1.3477 \\
240 & 4.29861e-07 & -5.8239 & 3.25182 & 1.8182 \\
480 & 4.58056e-08 & 3.2303 & 0.966347 & 1.7506\\
960 & 3.69301e-09 & 3.6327 & 0.284427 & 1.7645 \\
\hline
\end{tabular}
\end{table}
\section{Conclusion}
We introduced a novel tool for solving some special types of inverse problems, that is, to deconvolve solutions filtered by a Helmholtz-type differential filter.  We show that the noise free errors in using Mitlar are doubly asymptotic in $\alpha$ and $\delta$, that is $\mathcal{O}((\alpha \delta^2)^{J+1})$.  However, using the Tikhonov-Lavrentiev regularization or iterated Tikhonov-Lavrentiev regularization only results in the noise free errors depending solely on $\alpha$, which are $\mathcal{O}(\alpha)$ and $\mathcal{O}(\alpha^{J+1})$ respectively.

We also introduce a tool for calculating when to stop the iterations for our iterative algorithm.  We show that continuing to iterate until the solution converges gives the unwanted and noisy solution.  However, our stopping criterion guarantees that the iteration steps are getting closer to the original solution.  The example chosen to illustrate the stopping criterion showed the exact optimal stopping, which is not always guaranteed due to insufficient knowledge of the noise.  The actual implementation, on the other hand, will always try to stop before reaching the optimal number of iterations.  And if we can incorporated more knowledge about the noise added into the model, then we would be able to get a more accurate bound on the optimal number of iterations.

Furthermore, we would like to point out that the analysis done in Mitlar can be applied to other filters whose transfer function behaves like the Pao filter for small wave lengths, due to the robustness of the Tikhonov-Lavrentiev regularization method.
\newpage
\bibliographystyle{siam}
\bibliography{mitlar}
\end{document}